\documentclass[12pt]{article}

\pagestyle{plain}

\usepackage{amssymb}
\usepackage{latexsym}
\usepackage{graphicx}
\usepackage{amsmath}
\usepackage{amsthm}
\usepackage{amsfonts}
\usepackage{amscd}

\usepackage{comment}

\usepackage{indentfirst}

\theoremstyle{plain}
\newtheorem{theorem}{Theorem}[section]
\newtheorem{proposition}{Proposition}[section]
\newtheorem{lemma}{Lemma}[section]
\newtheorem{definition}{Definition}[section]
\newtheorem{corollary}{Corollary}[section]

\newtheorem{example}{Example}[section]
\newtheorem{remark}{Remark}[section]

\setlength{\textwidth}{18cm}
\setlength{\textheight}{22cm}
 \setlength{\oddsidemargin}{-10mm}
 \setlength{\evensidemargin}{-10mm}
\setlength{\topmargin}{-10mm}

\title{\    Hamiltonian cycles for the square of the augmentation graphs and  Gray codes for
restricted permutations and ascent sequences}
\author{Masaya Tomie}
\date{Morioka University, Takizawa-shi, Iwate 020-0694, Japan 
(e-mail: tomie@morioka-u.ac.jp)}

\begin{document}

\maketitle

\begin{abstract}

In this paper, we construct  a listing for the vertices of the augmentation 
graph of given size, and as a consequence, we obtain a Hamiltonian 
cycle for the square of the augmentation graph of given size. 
As applications, we have a Gray code for the 
$132$-$312$ avoiding permutations of given length  
such that two successive permutations differ by at most $2$
 adjacent transpositions. 
Also we obtain   Gray codes of strong distance $2$   
 for the $001$ avoiding ascent sequences and the $010$ avoiding ascent sequences  
 of  given length.

\end{abstract}

\section{Introduction}\label{introduction}

The  \textit{combinatorial Gray codes} is a mathematical model 
to generate the given combinatorial object such that each element is generated exactly once and 
any successive elements differ  in some pre-specified, usually small, way. 
There are many examples of 
 minimal change listings of combinatorial objects.
Though there are many bijections among the combinatorial objects 
of same cardinality, 
 in general, a bijection does not preserve the Gray code structures 
and hence we should construct Gray codes by ad hoc methods in many 
cases \cite{savage}.

It is known that 
a Gray code problem can be formulated as a Hamiltonian path or  a Hamiltonian cycle problem: 
the vertices of the graph are the objects and two vertices are joined by an edge
 if they differ from each other in the pre-specified way.

The {\it augmentation graph} of size $n$, 
see Section  {\rmfamily \ref{hamiltonian-augmentation}}, 
 can be defined on the binary words of length $n$.
Douglas West posed the problem to determine whether there is a Hamiltonian path 
in the augmentation graph. 
When $\frac{n(n-1)}{2}$ is even, there are no Hamiltonian paths for the 
augmentation graph of size $n$ and 
to the best of our knowledge, the problem is open for $n \ge 7$ with $\frac{n(n-1)}{2}$ being odd \cite{savage}. 
In  Section {\rmfamily \ref{hamiltonian-augmentation}}, we construct a Hamiltonian cycle for the square of the augmentation graph of size $n$.

The theory of pattern avoidance is a very active research area and many papers are written 
in this theme, see Kitaev's survey \cite{kitaev}. 
Basic notations about pattern avoiding permutations are given in  
 Section {\rmfamily \ref{preliminaries-permutation}}. 
The most popular results on the pattern avoidance are the enumerations of 
permutations avoiding a pattern of length three, which is enumerated by 
the $Catalan \ numbers$ \cite{knuth-1} \cite{knuth-2} \cite{simion-schmidt}.
For further information on pattern avoidance, see  \cite{bona}. 
One research direction is to construct Gray codes for the permutations of given length
 avoiding a set of 
patterns and many Gray codes are found by several authors. 
We denote the set of the permutations of length $n$ avoiding a pattern $p$ by 
$S_n(p)$ and that avoiding any pattern in $A$ by $S_n (A)$, where $A$ is a set of permutations.
Juarna and Vajnovszki constructed Gray codes for 
$S_n (123,132)$ and $S_n (123,132, p (p-1) \cdots 1 (p+1))$ 
\cite{juar-vajno}. 
In \cite{dukes}, Dukes et al. gave Gray codes for large families of 
pattern avoiding permutations  including many fundamental classes.
Baril improved their results \cite{baril}. 
Their proofs are based on ECO method \cite{barcucci-lungo-pergola-pinzani}  \cite{bernini-grazzini-pergola-pinzani}. 

The most fundamental cases are  Gray codes for the permutations of length $n$ avoiding 
a single pattern of length three. 
In particular, Baril constructed Gray codes
for $S_n(p)$ for $p \in S_3$, where two consecutive  
permutations differ by  at most three positions and his results are 
 optimal for odd $n$ \cite{baril}.
Next, we should consider the permutations of length $n$
 avoiding two  patterns of length three. 
By the reversal and complementation  and their compositions, 
there are five symmetry classes avoiding two pattern of length three, 
they are $S_n(123, 321)$, $S_n(123, 231)$, 
$S_n(231, 312)$,
$S_n(123, 132)$ and $S_n(132, 312)$ \cite{simion-schmidt}.

The set $S_n(123,321)$ is empty set for $n \ge 5$. 
Also it is easy to see that 
 $S_n(123, 231)$ and $S_n(231, 312)$ have no Gray codes such that 
successive permutations differ by at most $k$ transpositions, 
where the constant $k$ does not depend on $n$. 
For instance, consider $\lfloor \frac{n}{2} \rfloor 
(\lfloor \frac{n}{2} \rfloor -1) \cdots 2 1 n (n-1) \cdots 
(\lfloor \frac{n}{2} \rfloor + 2) (\lfloor \frac{n}{2} \rfloor + 1)$, where $\lfloor \cdot \rfloor$ is the  
floor function that gives the greatest integer less than or equal to the number. 
Juarna and Vajnovszki gave a Gray code for $S_n(123, 132)$ in \cite{juar-vajno}. 
The remaining case is $S_n(132, 312)$, which is not treated in the 
previous works. 
In section {\rmfamily \ref{gray-code-permutation}}, we give a Gray code for $S_n (132,312)$ as an 
application of our Hamiltonian cycle for $G(n-1)$ in Section  {\rmfamily \ref{hamiltonian-augmentation}}.

The {\it ascent sequences}, which we define in Section {\rmfamily \ref{preliminaries-ascent-sequence}}, 
play an important role in the study of $(2+2)$-free posets and  
several researchers have found the connections  between the ascent sequences and 
other combinatorial objects enumerated by Fishburn number, see \cite{melou-claeson-dukes-kitaev}  and section 3.2.2 of \cite{kitaev}. 

One can define the patterns in the ascent sequences analogous to the patterns in permutations, 
see  Section {\rmfamily \ref{preliminaries-ascent-sequence}}.
Pattern avoidance in the ascent sequences was studied by  Baxter, 
 Duncan,  Pudwell  and 
Steingr\'imsson and 
enumerative results and further properties about the  pattern avoiding ascent 
sequences are found at  \cite{baxter-pudwell}, 
\cite{duncan-steingrimsson} and their references.

In \cite{sabri-vajnovszki}, 
Sabri and Vajnovszki constructed a Gray code for the ascent sequences of given length. 
Sabri gave Gray codes for  
$\mathcal{A}_n (p)$ for $p \in \{ 011, 101, 021, 201, 012 \} $, 
where $\mathcal{A}_n (p)$ denotes the set of ascent sequences of 
length $n$ avoiding a pattern $p$ \cite{sabri}.
His Gray codes are based on 
 the Hamming distance, the number of positions at which two sequences differ. 
In this paper, we use  more restrictive distance, we call it {\it
 the strong distance},  see Section {\rmfamily \ref{preliminaries-ascent-sequence}}, 
and construct Gray codes for 
$\mathcal{A}_n (001)$  and  $\mathcal{A}_n (010)$  
such that the strong distance of two  successive sequences is at most $2$. 

Throughout the paper, we set $\mathbb{F}_2 := \{ 0, 1 \}$ and 
 $\mathbb{F}_2^{n} := \{ \epsilon_1 \epsilon_2 \ldots \epsilon_n | 
\epsilon_i \in \mathbb{F}_2, 1 \le i \le n \}$,  the set of binary words of
 length $n$.

\section{Preliminaries}\label{preliminaries}

In this section, we give several definitions and notations which we use in this paper.

\subsection{The Hamiltonian cycles and the Gray codes}

In this section, we introduce several notations from graph theory, see Diestel's  text \cite{diestel}.
A {\it graph} is a pair $G = (V, E)$ 
 of sets such that $E \subset V \times V$. Sometimes we denote $G$ for short. 
A element of $V$ is called a {\it vertex} and that of $E$ is called an {\it edge}. 

A {\it path} is a tuple of vertices such that  consecutive vertices are 
adjacent in the graph and the vertices are all distinct. 
The {\it length} of a path is the number of the vertices minus one. 
For example, an edge is a path with two vertices and its length is  $1$.
A {\it cycle} is a path such that the starting vertex and the ending vertex 
are adjacent.
A graph is called {\it connected} 
 if there is a path connecting each pair of vertices.
For a connected  graph $G$ and two vertices $u,v \in G$, 
the {\it distance} between 
$u$ and $v$ is the length of a shortest path from $u$ to $v$ and we denote 
it by $d_{G}(u,v)$.
A {\it Hamiltonian \ path} (resp. {\it Hamiltonian \ cycle}) 
is a path (resp. cycle) that visits each vertex exactly once.
For a graph $G = (V, E)$, define $G^2$ to be the graph on $V$ such that 
two vertices are adjacent if and only if their distance in $G$ is at most  $2$.
The graph  $G^2$ is called the square of $G$.

The  combinatorial Gray codes is a mathematical model 
to generate the given combinatorial object such that each element is generated exactly once and 
any successive elements differ  in some pre-specified, usually small, way. 
It is known that 
the Gray code problem can be formulated as a Hamiltonian path or a Hamiltonian cycle problem: 
the vertices of the graph are the objects and two vertices are joined by an edge
 if they differ from each other in the pre-specified way. 

A {\it listing} of a combinatorial object is a sequence such that each element appears exactly once. 
Let $X$ and  $Y$ be  combinatorial objects with $X \cap Y = \phi$ 
 and let  $\mathbb{L}_X = (x_1, x_2, \cdots , x_m)$ 
and  $\mathbb{L}_Y = (y_1, y_2, \cdots , y_n)$  be  listings of $X$ and  $Y$
 respectively. 
Write   $\mathbb{L}_X \circ \mathbb{L}_Y
:= (x_1, x_2, \cdots , x_m, y_1, y_2, \cdots , y_n)$, 
i.e., the {\it concatenation} of $X$ and $Y$, which is a listing of $X \cup Y$.

\subsection{Permutations and pattern avoiding permutations}\label{preliminaries-permutation}

We use  one-line notation, i.e., 
we denote a permutation $\omega \in S_{n}$ by the sequence 
$\omega (1) \omega (2) \cdots \omega (n)$, where 
$S_n$ is the set of all permutations on $\{ 1, 2, \cdots , n \}$.

For two permutations $\sigma = \sigma_1 \sigma_2 \cdots \sigma_n$ and 
$\tau = \tau_1 \tau_2 \cdots \tau_n$, 
we call they  differ by an adjacent transposition 
if and only if  $ \sigma_k = \tau_k$ for $k \neq i, i+1$, 
$\sigma_i = \tau_{i+1}$ and $\sigma_{i+1} = \tau_i$ for some $1 \le i \le n-1$. In other words, $\sigma$ can be obtained 
from $\tau$ by changing adjacent positions in the one-line notation of $\tau$.
For example, $352641$ and $325641$ differ by an adjacent transposition, 
changing the second and the third positions.

Similarly, we call two permutations of same length $\sigma$ and 
$\tau$  differ by  two 
 adjacent transpositions if and only if  $\sigma \neq \tau$ and 
there exists a permutation $\theta$ such that 
$\sigma$ and 
$\theta$ differ by an adjacent transposition and also 
$\theta$ and $\tau$ do. 
For example, $352641$ and $325614$ differ by two adjacent transpositions.
We say that $\sigma$ and $\tau$ differ by at most two adjacent transpositions 
if $\sigma$ and $\tau$ differ by an adjacent transposition or two adjacent 
transpositions.
Similarly, one can also define when two permutations differ by at most $k$ 
adjacent transpositions.

For $\omega = \omega_1 \omega_2 \cdots \omega_n \in S_{n}$ and 
$ \pi = \pi_1 \pi_2 \cdots \pi_k \in S_{k}$ with $k \le n$, we say that 
 a permutation $\omega$ has a  $\pi$-$pattern$  
 if 
$st(\omega_{i_1} \omega_{i_2} \cdots \omega_{i_k}) = \pi_1 \pi_2 \cdots \pi_k$ 
for some  $1 \le i_{1} < i_{2} < \cdots < i_{k} \le n$, where 
$st(\omega_{i_1} \omega_{i_2} \cdots \omega_{i_k})$
is a permutation in $S_{k}$ defined by the following process: 
the smallest value of $\omega_{i_1} \omega_{i_2} \cdots \omega_{i_k}$ 
is replaced with $1$, the second smallest value is replaced with $2$, and 
so on. 
We call $st(\omega_{i_1} \omega_{i_2} \cdots \omega_{i_k})$ the 
$standardization$ of $\omega_{i_1} \omega_{i_2} \cdots \omega_{i_k}$.
If $st(\omega_{i_1} \omega_{i_2} \cdots \omega_{i_k}) \neq \pi_1 \pi_2 \cdots 
\pi_k$ 
for any  $1 \le i_{1} < i_{2} < \cdots < i_{k} \le n$, we say that 
$\omega$ is a $\pi$-$avoiding$ $permutation$. 
We denote the set of $\pi$- avoiding permutations of length $n$ by $S_n(\pi)$ and also 
for a set of permutations $P$, 
let $S_n(P)$ denote the set of permutations of length $n$ avoiding each pattern in $P$.

In this paper, we define  Gray codes for a set of permutations as follows.

\begin{definition}

For $n \in \mathbb{N}$ and  a subset $\mathcal{X}$ of $S_n$, a Gray code for $\mathcal{X}$ 
is a listing of $\mathcal{X}$ such that each successive permutations differ by at most $d$ 
adjacent transpositions, where $d \in \mathbb{N}$ is independent from $n$.

\end{definition}

\subsection{Ascent sequences and restricted ascent sequences}\label{preliminaries-ascent-sequence}

An {\it ascent sequence} is a sequence $x_1 x_2 \cdots x_n$ of nonnegative integers such that $x_1 = 0$ and 

\begin{eqnarray*}
x_i \le \mathbf{asc}(x_1 x_2 \cdots x_{i-1}) + 1
\end{eqnarray*}

for $2 \le i \le n$, where $\mathbf{asc}(x_1 x_2 \cdots x_{i-1}) $ is 
the number of {\it ascents} in the sequence $x_1 x_2 \cdots x_{i-1}$, that is, 
the number of $1 \le j \le i-2$ such that $x_j < x_{j+1}$.
For example, $01201014216$ is an ascent sequence of length $11$ and 
$012010635$ is not an ascent sequence, because 
 the 7th position is greater than $\mathbf{asc}(012010) + 1 = 4$.
The number of ascent sequences of length $n$ is enumerated by Fishburn number, 
see A022493 in OEIS. We denote the set of ascent sequences of length $n$ by 
$\mathcal{A}_n$.

For a nonnegative integer sequence $\boldsymbol{x} = x_1 x_2 \cdots x_n$, 
the {\it reduction} of $x$ is the the sequence obtained by replacing the 
$i$-th smallest digits of $x$ with $i-1$ and we denote it by $\mathrm{red}(\boldsymbol{x})$.
For example, $\mathrm{red}(200424) =100212$. 
One can define the patterns in the ascent sequences analogous to the patterns in  permutations.

\begin{definition}\cite{duncan-steingrimsson} 
Let $p_1 p_2 \ldots p_k$ be a sequence of nonnegative integers with 
 $\mathrm{red}(p_1 p_2 \ldots p_k) = p_1 p_2 \ldots p_k$.
An ascent sequence $a_1 a_2 \ldots a_n$ is called  a $ p_1 p_2 \ldots p_k$ pattern avoiding ascent sequence if and only if 
for any  $1 \le i_1 < i_2 < \ldots < i_k \le n$, 
$\mathrm{red}(a_{i_1} a_{i_2} \ldots  a_{i_k}) \neq  p_1 p_2 \ldots p_k$.
We denote the set of $ p_1 p_2 \ldots p_k$  avoiding ascent sequences of length $n$ by $\mathcal{A}_n( p_1 p_2 \ldots p_k)$.
\end{definition}

For a set of integer sequences $\mathcal{I}$ of given length $n$, 
a Gray code for $\mathcal{I}$ 
 is sometimes defined as a listing of $\mathcal{I}$  
 such that the {\it Hamming distance}
 between any two successive elements
(the number of positions at which two sequences differ)
 is bounded by a given constant 
which is independent from $n$.
Sabri gave Gray codes of Hamming distance $3$, 
that is a listing such that the Hamming distance of any consecutive 
elements is at most $3$, 
 for $\mathcal{A}_n(p)$, where $p \in \{ 011, 101, 021, 201 \} $ and that of Hamming distance $1$ for 
$\mathcal{A}_n(012)$ \cite{sabri}.

We define a Gray code by using another distance. 
 For  $a_1 a_2 \ldots a_n, b_1 b_2 \ldots b_n \in \mathcal{A}_n$, 
define  

\begin{displaymath}
d_{\mathrm{str}} (a_1 a_2 \ldots a_n, b_1 b_2 \ldots b_n) := \sum_{k=1}^{n} |a_k - b_k|, 
\end{displaymath} 

where $| \cdot |$ is the absolute value of given number. 

We call $d_{\rm{str}}$ {\it strong distance}. 
The strong distance is essentially equivalent to 
the  {\it adjacent interchange property} which is referred in 
\cite{savage}. 
In analogy with Sabri's definition in \cite{sabri}, 
we can state the following definition.

\begin{definition}

For a subset $\mathcal{X}$ of 
$\mathcal{A}_n$, a Gray code of strong distance
 $d$ for $\mathcal{X}$ is a listing of  $\mathcal{X}$ 
such that  each strong distance of two successive elements is
at most  $d$ and there exists two successive elements of strong 
distance $d$, where $d \in \mathbb{N}$ is independent from $n$.
\end{definition}

The strong distance of given two ascent sequences is larger than or equal to the Hamming distance of them and the strong distance  is a more restrictive distance than the Hamming distance.

\section{A Hamiltonian cycle for the square of 
the augmentation graph}\label{hamiltonian-augmentation}

Douglas West defined the augmentation graph of size $n$ on the 
subsets of $\{ 1, 2, \ldots , n \}$ \cite{savage}.
We realize the augmentation graph on $\mathbb{F}_2^{n}$ 
 and we denote it by $G(n)$.
No confusion should results when we 
call our graph $G(n)$ the augmentation graph of size $n$.

Set  
$V(G(n)) :=
 \mathbb{F}_2^{n}$  
and draw an edge between 
$ \epsilon_{1} \epsilon_{2} \cdots \epsilon_{n}$ and 
$\epsilon^{\prime}_{1} \epsilon^{\prime}_{2} \cdots \epsilon^{\prime}_{n} \in \mathbb{F}_2^{n}$
if 

\begin{enumerate}

\item $\epsilon_{1} \neq \epsilon^{\prime}_{1}$ and 
$\epsilon_{i} = \epsilon^{\prime}_{i}$ for $2 \le i \le n$ or 

\item for some $1 \le i \le n-1$, 
$\epsilon_i \epsilon_{i+1} = 01$ and $\epsilon^{\prime}_i 
\epsilon^{\prime}_{i+1} = 10$ or 
$\epsilon_i \epsilon_{i+1} = 10$ and $\epsilon^{\prime}_i 
\epsilon^{\prime}_{i+1} = 01$ 
and $\epsilon_j = \epsilon^{\prime}_j$ for $j \neq i, i+1$.

In other words,  two vertices are adjacent 
 if and only if 
 they only differ in the first positions or 
  differ   by an interchange of a $0$ and a $1$
 in adjacent positions.

\end{enumerate}

\begin{remark}

In $G(n)$, the degree of  $00 \cdots 0$ and $11 \cdots 1$ is $1$.  
Hence $G(n)$  has no Hamiltonian cycles for $ n \ge 2$. 
To the best of our knowledge,  there are no Hamiltonian paths for  $G(n)$ with $\frac{n(n-1)}{2}$ being 
even and the existence 
of Hamiltonian paths  is
 not known for  $n \ge 7$ with 
 $\frac{n(n-1)}{2}$ being odd \cite{savage}.

\end{remark}

Figure \ref{fig:g-3-graph} shows   $G(3)$, left hand side, and $G^2(3)$, right hand side.

\begin{figure}[htbp]
\begin{center}
\includegraphics[width=100mm]{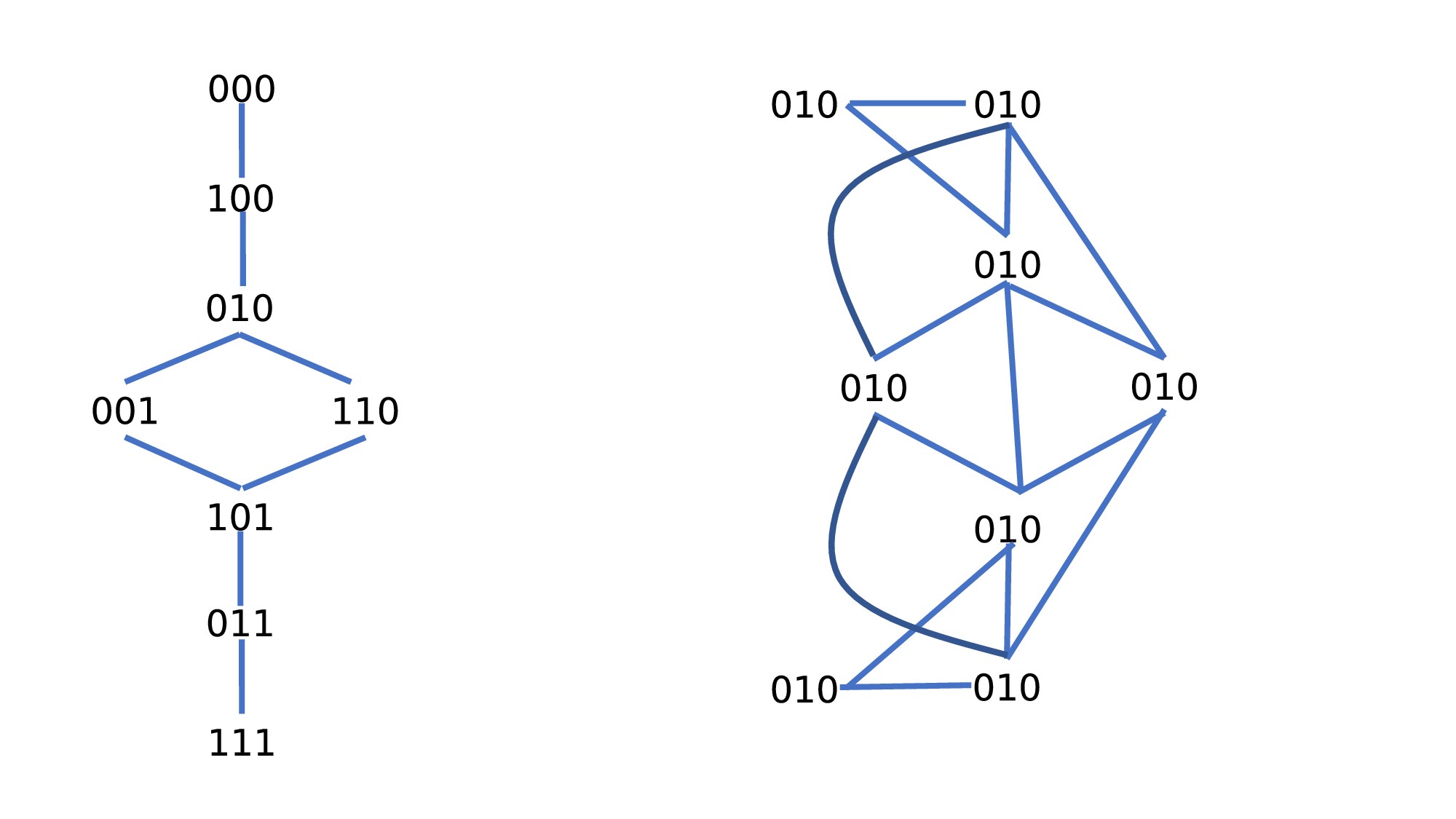}
\caption{ The graphs $G(3)$ and $G^2(3)$. }
\label{fig:g-3-graph}
\end{center}
\end{figure}

\begin{remark}

A listing of $V(G(n))$ is a $2^n$-tuple 
of  $V(G(n))$  such that each vertex appears exactly once.
For a listing  
$(\alpha_1, \alpha_2, \cdots , \alpha_{2^n})$ such that every distance of 
successive vertices is at most $2$, 
  we say that distance $2$ jumps do not appear consecutively in the listing 
if there is no $2 \le i \le 2^{n} - 1$ such that  
$d_{G(n)}(\boldsymbol{\alpha}_{i-1}, \boldsymbol{\alpha}_{i}) = 
d_{G(n)}(\boldsymbol{\alpha}_{i}, \boldsymbol{\alpha}_{i+1}) = 2$.

\end{remark}

\begin{definition}

For $\boldsymbol{\epsilon} = \epsilon_1 \epsilon_2 \cdots \epsilon_m 
\in \mathbb{F}_{2}^{m}$ and $\eta, \mu \in \mathbb{F}_{2}$, define  
$\boldsymbol{\epsilon}  \eta := 
 \epsilon_1 \epsilon_2 \cdots \epsilon_m  \eta $
(resp. $\boldsymbol{\epsilon}  \eta \mu := 
\epsilon_1 \epsilon_2 \cdots \epsilon_m  \eta \mu$ )
 which 
is the concatenation of $\boldsymbol{\epsilon}$ and $\eta$, 
(resp. $\boldsymbol{\epsilon}$, $\eta$ and $\mu$) and 
it is an element of $\mathbb{F}_{2}^{m+1}$ (resp. $\mathbb{F}_{2}^{m+2}$).

\end{definition}

The following  is the main result of this section.

\begin{theorem}\label{augment-hamilton-cycle}
For $n \ge 2$, there is a listing  

\begin{equation}
( \boldsymbol{\epsilon}_{1}, \boldsymbol{\epsilon}_{2},  
\cdots , \boldsymbol{\epsilon}_{2^{n}} ) 
\end{equation}

of $V(G(n))$, 
which satisfies: \\

{\rm (L1)}
 $\boldsymbol{\epsilon}_{1} = 00 \cdots 0$, 
all entries are $0$, and
$ \boldsymbol{\epsilon}_{2^{n}} = 100 \cdots 0$, 
the  first entry is $1$ and the rest of the
 entries  are all $0$;

{\rm (L2)}
 $d_{G(n)}(\boldsymbol{\epsilon}_{i}, \boldsymbol{\epsilon}_{i+1}) \le 2$ 
for $1 \le i \le 2^{n} - 1$;

{\rm (L3)}
if $d_{G(n)}(\boldsymbol{\epsilon}_{i}, \boldsymbol{\epsilon}_{i+1}) = 2$,  then  $d_{G(n)}(\boldsymbol{\epsilon}_{i-1}, \boldsymbol{\epsilon}_{i}) = 
d_{G(n)}(\boldsymbol{\epsilon}_{i+1}, \boldsymbol{\epsilon}_{i+2}) = 1$ for 
$2 \le i \le 2^{n} - 2$.

In other words, there is a listing which starts from $ 00 \cdots 0$ and ends at  $100 \cdots 0$ such that every distance of successive elements is at most $2$ and distance $2$ jumps do not appear consecutively.

\end{theorem}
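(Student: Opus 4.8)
The plan is to prove, by induction on $n$, a slightly stronger statement: for \emph{every} word $\boldsymbol{s}\in V(G(n-1))$ there is a listing of $V(G(n))$ satisfying (L2) and (L3) whose first and last entries are $0\boldsymbol{s}$ and $1\boldsymbol{s}$, by which I mean the two words that agree in all but the first coordinate and carry the common remaining letters $\boldsymbol{s}$ (these are adjacent in $G(n)$ by a rule-1 edge). Taking $\boldsymbol{s}=0\cdots0$ recovers the theorem. The base cases $n=1$, with $(0,1)$, and $n=2$, with $(00,01,11,10)$, are checked directly.

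For the inductive step I split $V(G(n))$ according to the last letter into $H_0=\{\boldsymbol{\delta}0\}$ and $H_1=\{\boldsymbol{\delta}1\}$, with $\boldsymbol{\delta}\in V(G(n-1))$. Inspecting the two edge rules, one sees that within a fixed last letter the admissible moves (a rule-1 flip of the first letter, or a rule-2 swap in positions $1,\dots,n-1$) are exactly the edges of $G(n-1)$, so $\boldsymbol{\delta}\eta\mapsto\boldsymbol{\delta}$ identifies the induced subgraph on $H_\eta$ with $G(n-1)$. This identification is distance-faithful in the range that matters: two words of $H_\eta$ lie at $G(n)$-distance $1$ (resp. $2$) exactly when their prefixes lie at $G(n-1)$-distance $1$ (resp. $2$), because the only edges of $G(n)$ leaving $H_\eta$ are the rule-2 swaps in positions $n-1,n$. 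I call these the \emph{crossover} edges; such an edge joins $\boldsymbol{\delta}0$ with $\boldsymbol{\delta}'1$, where $\boldsymbol{\delta}$ ends in $1$, $\boldsymbol{\delta}'$ ends in $0$, and the two prefixes otherwise agree.

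Now fix $\boldsymbol{s}=s_1\cdots s_{n-1}$. Both endpoints $0\boldsymbol{s}$ and $1\boldsymbol{s}$ lie in the same layer $H_\eta$ with $\eta=s_{n-1}$, and in prefix coordinates they are $0\boldsymbol{s}'$ and $1\boldsymbol{s}'$ with $\boldsymbol{s}'=s_1\cdots s_{n-2}$. I take as the \emph{outer arc} the lift to $H_\eta$ of the listing supplied by the induction hypothesis for $\boldsymbol{s}'$; by distance-faithfulness it runs from $0\boldsymbol{s}$ to $1\boldsymbol{s}$ inside $H_\eta$ and already satisfies (L2) and (L3) there. It remains to splice in all of $H_{\bar\eta}$: I single out one rule-1 (first-letter-flip) step of the outer arc whose two words end in $\bar\eta$, delete it, and reconnect its endpoints by a crossover edge into $H_{\bar\eta}$, then a (possibly reversed) lifted induction-hypothesis listing sweeping all of $H_{\bar\eta}$, then a crossover edge back. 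Since the deleted step is first-letter-flip with prefixes ending in $\bar\eta$, the two entry/exit prefixes of the inserted block form a pair $\{0\boldsymbol{t},1\boldsymbol{t}\}$, so the induction hypothesis indeed applies to it. The two crossovers are rule-2 edges, hence distance $1$, so each seam is a distance-$1$ step; consequently every distance-$2$ step of the assembled listing descends from a distance-$2$ step of one of the two sub-listings, and because those are isolated by (L3) one size down while the seams (and the deleted step) are distance $1$, no two distance-$2$ steps become adjacent. This yields (L2) and (L3) for $G(n)$, and the endpoints are untouched, giving (L1).

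The delicate point, which I expect to be the main obstacle, is guaranteeing a usable splice step: the outer arc must contain a first-letter-flip step whose words end in $\bar\eta$, and $\eta=s_{n-1}$ is a letter that is \emph{not} part of $\boldsymbol{s}'$, so this couples the chosen parity with the internal structure of the $\boldsymbol{s}'$-listing. I therefore expect to carry an extra invariant through the induction recording which crossover-compatible steps, and of which last-letter parity, each listing provides. Already the forced path $G(2)$ — whose two inter-layer transitions are necessarily \emph{distance $2$} — shows that a clean first-letter-flip splice of the right parity need not exist, so the argument must allow distance-$2$ crossovers as a fallback, and these in turn constrain the first and last steps of the sub-listings used in the splice. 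Once the invariant is pinned down and shown to be preserved, checking (L3) at the two seams reduces to a finite local case analysis, and I would expect the bulk of the work to lie in setting up and propagating that invariant rather than in these final verifications.
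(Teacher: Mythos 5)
Your architecture is the paper's: split $V(G(n))$ by the last letter into two copies of $G(n-1)$, traverse one copy, and splice the other copy in through the ``crossover'' (positions $n{-}1,n$ swap) edges. The seam analysis you give for (L2)/(L3) is also sound \emph{provided} the splice exists. But the proof has a genuine gap, one you flag yourself: everything hinges on the outer arc containing a usable splice step of the correct parity, and you never exhibit the invariant that guarantees this, you only announce that one must be ``pinned down and shown to be preserved.'' That invariant is not a final verification to be deferred --- it is the actual content of the theorem's proof. Worse, your strengthening to arbitrary endpoints $0\boldsymbol{s},1\boldsymbol{s}$ makes the missing invariant harder, not easier: the splice parity you need is $\bar\eta$ with $\eta=s_{n-1}$, which is independent of the prefix $\boldsymbol{s}'$ governing the outer arc, so a single induction hypothesis would have to supply crossover-compatible first-letter-flip steps of \emph{both} parities for \emph{every} $\boldsymbol{s}'$, and you already observe ($G(2)$) that such steps can fail to exist, forcing distance-$2$ seams whose interaction with (L3) you leave unexamined.

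For comparison, the paper resolves exactly this point by carrying four concrete conditions through the induction (its (A1)--(A4) for odd length, (B1)--(B4) for even length, Propositions 3.1 and 3.2). Condition (A3)/(B3) asserts that the specific first-letter-flip pair $10\cdots01$, $00\cdots01$ (resp.\ $10\cdots01$, $001\cdots01$) occurs consecutively in the listing; this is the designated cut point, chosen because the crossover partners of $10\cdots01\cdot 0$ and $00\cdots01\cdot 0$ are precisely $10\cdots0\cdot 1$ and $00\cdots0\cdot 1$, i.e.\ the two \emph{endpoints} of the lower-level listing guaranteed by (L1). Hence the inserted layer is just the whole induction-hypothesis listing reversed (or rotated), both seams are genuine distance-$1$ crossover edges, and no fallback is needed. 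Condition (A4)/(B4) (the penultimate element is $001\cdots0$) is what lets one re-establish (A3)/(B3) one level up, which is why the paper alternates between an odd-to-even and an even-to-odd proposition. Until you formulate and propagate an invariant doing this job, your argument does not close.
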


In the above listing $( \boldsymbol{\epsilon}_{1}, \boldsymbol{\epsilon}_{2},  \cdots , \boldsymbol{\epsilon}_{2^{n}} )$, the distance of 
$\boldsymbol{\epsilon}_{1}$ and $\boldsymbol{\epsilon}_{2^n}$ is 
$1$. Hence we obtain the following Corollary.

\begin{corollary}\label{main-augmentation}

For $n \in \mathbb{N}_{\ge 2}$, 
the square of $G(n)$ has a Hamiltonian cycle.

\end{corollary}

To prove Theorem {\rmfamily \ref{augment-hamilton-cycle}}, 
we show Proposition {\rmfamily \ref{augmentation-prop-odd-to-even}} and Proposition {\rmfamily \ref{augmentation-prop-even-to-odd}}.

\begin{proposition}\label{augmentation-prop-odd-to-even}

For $k \ge 2$, suppose that there is a listing 

\begin{equation}
 ( \boldsymbol{\epsilon}_{1}, \boldsymbol{\epsilon}_{2},  
\cdots , \boldsymbol{\epsilon}_{2^{2k-1}} )
\end{equation}

of $V(G(2k-1))$, which satisfies: \\

{\rm (A1)}
  $\boldsymbol{\epsilon}_{1} = 00 \cdots 00$,  all entries are $0$, and 
$\boldsymbol{\epsilon}_{2^{2k-1}} = 10 \cdots 00$, the 
first entry is $1$ and the rest of the entries  are all $0$; 

{\rm (A2)}  
 $d_{G(2k-1)}(\boldsymbol{\epsilon}_{i}, \boldsymbol{\epsilon}_{i+1}) \le 2$ 
for $1 \le i \le 2^{2k-1} - 1$ and distance $2$ jumps do not appear consecutively;

{\rm (A3)}  the element $00 \cdots 01$, the last entry is $1$ and the rest of the 
entries are all $0$, appears next to $10 \cdots 01$, the first and the last entries are $1$ and the rest of the entries are all $0$; 

{\rm (A4)}  $\boldsymbol{\epsilon}_{2^{2k-1}-1}$, the second from the last 
entry of the listing, is $001 \cdots 00$, the third entry is $1$ and the rest of entries are all $0$.

Then there is a listing  

\begin{equation}
( \boldsymbol{\eta}_{1}, \boldsymbol{\eta}_{2},  
\cdots , \boldsymbol{\eta}_{2^{2k}} )
\end{equation}

of $V(G(2k))$, 
which satisfies: \\

{\rm (B1)}
 $\boldsymbol{\eta}_{1} = 00 \cdots 00$, all entries are $0$, and
$\boldsymbol{\eta}_{2^{2k}} = 10 \cdots 00$, 
the first entry is $1$ and the rest of the entries  are all $0$; 

{\rm (B2)}  
 $d_{G(2k)}(\boldsymbol{\eta}_{i}, \boldsymbol{\eta}_{i+1}) \le 2$ for 
 $1 \le i \le 2^{2k} - 1$
and distance $2$ jumps do not appear consecutively; 

{\rm (B3)}  
$001 \cdots 01$, the third and the last entries are $1$ and the rest of the entries are all $0$, appears next to $100 \cdots 01$, the first and the last entries are $1$ and the rest of the entries are all $0$;

{\rm (B4)}
   $\boldsymbol{\eta}_{2^{2k}-1}$, the second from the last 
entry in the listing, is $001 \cdots 0$, the third entry is $1$ and the rest of the entries are all $0$.

\end{proposition}

\begin{proof}

Let $\mathbb{L}_{2k-1} = ( \boldsymbol{\epsilon}_{1}, \boldsymbol{\epsilon}_{2},\cdots , \boldsymbol{\epsilon}_{2^{2k-1}} )$ be a listing of $V(G(2k-1))$ 
which satisfies the  conditions from {\rm (A1)} to {\rm (A4)}. 
From {\rm (A3)},  we have 
$\boldsymbol{\epsilon}_i =  10 \cdots 01$, the first and the 
last entries are $1$ 
and the rest of the entries are all $0$,
 and 
$\boldsymbol{\epsilon}_{i+1} =  00 \cdots 01$, the last entry is $1$ and 
the rest of entries are all $0$, for some $2 \le i \le 2^{2k-1}-2$.
Remark that $\boldsymbol{\epsilon}_{2^{2k-1} - 1} = 001 \cdots 00$, the third 
entry is $1$ and the rest of entries are all $0$.

Set 

\begin{equation}
\mathbb{X}_1 := (\boldsymbol{\epsilon}_{1}  0, 
\boldsymbol{\epsilon}_{2}  0, \cdots , 
\boldsymbol{\epsilon}_{i}  0),
\end{equation}

\begin{equation}
\mathbb{X}_2 := (\boldsymbol{\epsilon}_{2^{2k-1}} 1, 
\boldsymbol{\epsilon}_{2^{2k-1} -1}  1, \cdots , 
\boldsymbol{\epsilon}_{2}  1, 
\boldsymbol{\epsilon}_{1}  1),
\end{equation}

\begin{equation}
\mathbb{X}_3 := (\boldsymbol{\epsilon}_{i+1}  0, 
\boldsymbol{\epsilon}_{i+2}  0, \cdots , 
\boldsymbol{\epsilon}_{2^{2k-1} -1} 0, 
\boldsymbol{\epsilon}_{2^{2k-1}}  0).
\end{equation}

The concatenation 
 $ \mathbb{X}_1 \circ \mathbb{X}_2 \circ \mathbb{X}_3$ 
is  a listing of $G(2k)$ and put 
$ \mathbb{X}_1 \circ \mathbb{X}_2 \circ \mathbb{X}_3 = ( \boldsymbol{\eta}_{1}, \boldsymbol{\eta}_{2},  
\cdots , \boldsymbol{\eta}_{2^{2k}} )$. Remark that $|\mathbb{X}_3| \ge 2$.

From the construction, we have $\boldsymbol{\eta}_{1} = 00 \cdots 00$ and 
$\boldsymbol{\eta}_{2^{2k}} = 10 \cdots 00$.  We see {\rm (B1)}.

For each $\mathbb{X}_1$, $\mathbb{X}_2$ and $\mathbb{X}_3$, 
every distance of successive elements is at most $2$ and 
distance $2$ jumps do not appear consecutively. 
Also, the distance between the last element of $\mathbb{X}_1$ and 
the first element of $\mathbb{X}_2$ is $1$ and that of 
 between the last element of $\mathbb{X}_2$ and 
the first element of $\mathbb{X}_3$ is   $1$.
 Hence   $ \mathbb{X}_1 \circ \mathbb{X}_2 \circ \mathbb{X}_3$  satisfies {\rm (B2)}.

The first entry of $\mathbb{X}_2$ is 
$100 \cdots 01$, the first and the last entries are $1$ and the rest of entries are all $0$.
Because $\boldsymbol{\epsilon}_{2^{2k-1}-1} = 001 \cdots 00$, the third entry is $1$ and the rest of entries are all $0$, 
the second entry of $\mathbb{X}_2$ is $001 \cdots 01$, the third and the
 last entries are $1$ and the rest of entries are all $0$. 
Hence we see {\rm (B3)}.

The second from the last entry of 
$ \mathbb{X}_1 \circ \mathbb{X}_2 \circ \mathbb{X}_3 $
is in $\mathbb{X}_3$, because  $|\mathbb{X}_3| \ge 2$, and 
that is $\boldsymbol{\epsilon}_{2^{2k-1}-1}  0 = 001 \cdots 00$, the third 
entry is $1$ and the rest of the entries are all $0$. Hence we see  {\rm (B4)}. 
This completes the proof.

\end{proof}

\begin{example}\label{01-listing-n=4-example}

For $G(3)$, put 

\begin{center}
\begin{tabular}{cccccccc}

$\boldsymbol{\epsilon}_1 = 000$, & 
$\boldsymbol{\epsilon}_2 = 010$, &
$\boldsymbol{\epsilon}_3 = 110$, &
$\boldsymbol{\epsilon}_4 = 011$, &
$\boldsymbol{\epsilon}_5 = 111$, &
$\boldsymbol{\epsilon}_6 = 101$, &
$\boldsymbol{\epsilon}_7 = 001$, &
$\boldsymbol{\epsilon}_8 = 100$ \\
\end{tabular}

\end{center}

and then, the listing 
$(\boldsymbol{\epsilon}_1, \boldsymbol{\epsilon}_2, \cdots 
,  \boldsymbol{\epsilon}_8)$ satisfies 
from {\rm (A1)} to {\rm (A4)}. 
Set 

\begin{equation}
\mathbb{X}_1 := 
(\boldsymbol{\epsilon}_1  0, \boldsymbol{\epsilon}_2  0, 
\boldsymbol{\epsilon}_3  0, \boldsymbol{\epsilon}_4  0,
\boldsymbol{\epsilon}_5  0, \boldsymbol{\epsilon}_6  0 ),
\end{equation}

\begin{equation}
\mathbb{X}_2 := 
(\boldsymbol{\epsilon}_8  1, \boldsymbol{\epsilon}_7  1, 
\boldsymbol{\epsilon}_6  1, \boldsymbol{\epsilon}_5  1,
\boldsymbol{\epsilon}_4  1, \boldsymbol{\epsilon}_3  1, 
\boldsymbol{\epsilon}_2  1, \boldsymbol{\epsilon}_1  1 )
\end{equation}

and 

\begin{equation}
\mathbb{X}_3 := 
(\boldsymbol{\epsilon}_7  0, \boldsymbol{\epsilon}_8  0).
\end{equation}

Then the  listing $\mathbb{X}_1 \circ \mathbb{X}_2  \circ 
\mathbb{X}_3$ is

\begin{equation}
(0000, 0100, 1100, 0110, 1110, 1010, 
1001, 0011, 1011, 1111, 0111, 1101, 0101, 0001, 
0010, 1000)
\end{equation}

and  that satisfies from {\rm (B1) } to {\rm (B4)}.

\end{example}

\begin{proposition}\label{augmentation-prop-even-to-odd}

For $k \ge 2$, suppose that there is a listing

\begin{equation}
 ( \boldsymbol{\epsilon}_{1}, \boldsymbol{\epsilon}_{2},  
\cdots , \boldsymbol{\epsilon}_{2^{2k}} )
\end{equation}

 of $V(G(2k))$, 
which satisfies: \\

{\rm (B1)}
 $\boldsymbol{\epsilon}_{1} = 00 \cdots 00$, all entries are $0$, and
$\boldsymbol{\epsilon}_{2^{2k}} = 10 \cdots 00$, 
the first entry is $1$ and the rest of the entries  are all $0$; 

{\rm (B2)}  
  $d_{G(2k)}(\boldsymbol{\epsilon}_{i}, \boldsymbol{\epsilon}_{i+1}) \le 2$ 
for $1 \le i \le 2^{2k} - 1$
 and distance $2$ jumps do not appear consecutively; 

{\rm (B3)}  
$001 \cdots 01$, the third and the last entries are $1$ and the rest of the entries are all $0$, appears next to $100 \cdots 01$, the first and the last entries are $1$ and the rest of the entries are all $0$;

{\rm (B4)}
   $\boldsymbol{\epsilon}_{2^{2k}-1}$, the second from the last 
entry in the listing, is $001 \cdots 00$, the third entry is $1$ and the rest of the entries are all $0$.

Then there is a listing

\begin{equation}
( \boldsymbol{\eta}_{1}, \boldsymbol{\eta}_{2},  
\cdots , \boldsymbol{\eta}_{2^{2k+1}} )
\end{equation}

 of $V(G(2k+1))$, 
which satisfies:

{\rm (A1)}
  $\boldsymbol{\eta}_{1} = 00 \cdots 00$,  all entries are $0$, and 
$\boldsymbol{\eta}_{2^{2k+1}} = 10 \cdots 00$, the first entry is $1$ and the rest of the
 entries  are all $0$; 

{\rm (A2)}  
 $d_{G(2k+1)}(\boldsymbol{\eta}_{i}, \boldsymbol{\eta}_{i+1}) \le 2$ 
for $1 \le i \le 2^{2k} - 1$
 and distance $2$ jumps do not appear consecutively; 

{\rm (A3)}  the element $00 \cdots 01$, the last entry is $1$ and the rest of 
the entries are all $0$, appears next to $10 \cdots 01$, the first and the last entries are $1$ and the rest of the entries are all $0$; 

{\rm (A4)}  $\boldsymbol{\eta}_{2^{2k+1}-1}$, the second from the last 
entry in the listing, is $001 \cdots 00$, the third entry is $1$ and the rest of entries are all $0$.

\end{proposition}

\begin{proof}

Let $\mathbb{L}_{2k} = ( \boldsymbol{\epsilon}_{1}, \boldsymbol{\epsilon}_{2},\cdots , \boldsymbol{\epsilon}_{2^{2k}} )$ be a listing of $V(G(2k))$ 
which satisfies the  conditions from {\rm (B1)} to {\rm (B4)}. 
From {\rm (B3)},  we have 
$\boldsymbol{\epsilon}_i =  10 \cdots 01$, the first and the 
last entries are $1$ 
and the rest of entries are all $0$,
 and 
$\boldsymbol{\epsilon}_{i+1} =  001 \cdots 01$, the third and the last entries 
are $1$ and 
the rest of entries are all $0$, for some $2 \le i \le 2^{2k}-2$.
Remark that $\boldsymbol{\epsilon}_{2^{2k} - 1} = 001 \cdots 00$, the third 
entry is $1$ and the rest of the entries are all $0$.

Set

\begin{equation}
\mathbb{Y}_1 := (\boldsymbol{\epsilon}_{1}  0, 
\boldsymbol{\epsilon}_{2}  0, \cdots , 
\boldsymbol{\epsilon}_{i}  0),
\end{equation}

\begin{equation}
\mathbb{Y}_2 := (\boldsymbol{\epsilon}_{2^{2k}}  1, 
\boldsymbol{\epsilon}_{1}  1, 
\boldsymbol{\epsilon}_{2}  1, \cdots , 
\boldsymbol{\epsilon}_{2^{2k} - 2}  1, 
\boldsymbol{\epsilon}_{2^{2k} - 1}  1),
\end{equation}

\begin{equation}
\mathbb{Y}_3 := (\boldsymbol{\epsilon}_{i+1}  0, 
\boldsymbol{\epsilon}_{i+2}  0, \cdots , 
\boldsymbol{\epsilon}_{2^{2k}}  0).
\end{equation}

The concatenation 
 $ \mathbb{Y}_1 \circ \mathbb{Y}_2 \circ \mathbb{Y}_3$ 
is  a listing of $G(2k+1)$ and put 
 $ \mathbb{Y}_1 \circ \mathbb{Y}_2 \circ \mathbb{Y}_3 = 
( \boldsymbol{\eta}_{1}, \boldsymbol{\eta}_{2},  
\cdots , \boldsymbol{\eta}_{2^{2k+1}} )$. 
Remark that $|\mathbb{Y}_3| \ge 2$.

From the construction, we have 
$\boldsymbol{\eta}_{1} = 00 \cdots 00$,  all entries are $0$ and 
$\boldsymbol{\eta}_{2^{2k+1}} = 10 \cdots 00$, first entry is $1$ and the rest of the entries  are all $0$. 
Hence we see  (A1).

In  $\mathbb{Y}_2$, the distance between the first element and 
the second element is $1$. Hence, for each 
 $\mathbb{Y}_1$, $\mathbb{Y}_2$ and $\mathbb{Y}_3$, 
every distance of successive elements is at most $2$ and 
distance $2$ jumps do not appear consecutively.
Also the distance between the last element of $\mathbb{Y}_1$ and the first 
element of $\mathbb{Y}_2$ is $1$ and  that of 
between the last element of $\mathbb{Y}_2$ and the first element of 
$\mathbb{Y}_3$ is $1$. Hence   $ \mathbb{Y}_1 \circ \mathbb{Y}_2 \circ \mathbb{Y}_3$  satisfies (A2).

The first element of $\mathbb{Y}_2$ is $10 \cdots 01$, the first and the 
last entries are $1$ and the rest of entries are all $0$, and the second element of 
$\mathbb{Y}_2$ is $00 \cdots 01$, the last entry is $1$ and the rest of 
entries are all $0$. Hence we obtain  (A3).
The second from the last entry of  $ \mathbb{Y}_1 \circ \mathbb{Y}_2 \circ \mathbb{Y}_3$ 
is in $\mathbb{Y}_3$, because $|\mathbb{Y}_3| \ge 2$, and that is $\boldsymbol{\epsilon}_{2^{2k}-1} 0 = 
001 \cdots 00$. Hence we have (A4). 
This completes the proof.

\end{proof}

\begin{example}\label{01-listing-n=5-example}

From example {\rmfamily \ref{01-listing-n=4-example}}, put

\begin{center}
\begin{tabular}{ccccccc}
$\boldsymbol{\epsilon}_1 = 0000$, &
$\boldsymbol{\epsilon}_2 = 0100$, &
$\boldsymbol{\epsilon}_3 = 1100$, &
$\boldsymbol{\epsilon}_4 = 0110$, &
$\boldsymbol{\epsilon}_5 = 1110$, &
$\boldsymbol{\epsilon}_6 = 1010$, &
$\boldsymbol{\epsilon}_7 = 1001$, \\
$\boldsymbol{\epsilon}_8 = 0011$, & 
$\boldsymbol{\epsilon}_9 = 1011$, & 
$\boldsymbol{\epsilon}_{10} = 1111$, &
$\boldsymbol{\epsilon}_{11} = 0111$, &
$\boldsymbol{\epsilon}_{12} = 1101$, &
$\boldsymbol{\epsilon}_{13} = 0101$, &
$\boldsymbol{\epsilon}_{14} = 0001$, \\
$\boldsymbol{\epsilon}_{15} = 0010$, &
$\boldsymbol{\epsilon}_{16} = 1000$. \\

\end{tabular}
\end{center}

The  listing
$(\boldsymbol{\epsilon}_1, \boldsymbol{\epsilon}_2, \cdots 
, \boldsymbol{\epsilon}_{16})$  of $V(G(4))$ satisfies from {\rm (B1)} to {\rm (B4)}. 

Set

\begin{equation}
\mathbb{Y}_1 := 
(\boldsymbol{\epsilon}_1  0, \boldsymbol{\epsilon}_2  0, 
\boldsymbol{\epsilon}_3  0, \boldsymbol{\epsilon}_4  0,
\boldsymbol{\epsilon}_5  0, \boldsymbol{\epsilon}_6  0, 
\boldsymbol{\epsilon}_7  0 ),
\end{equation}

\begin{equation}
\mathbb{Y}_2 := 
(\boldsymbol{\epsilon}_{16}  1, \boldsymbol{\epsilon}_1  1, 
\boldsymbol{\epsilon}_2  1, \boldsymbol{\epsilon}_3  1,
\boldsymbol{\epsilon}_4  1, \boldsymbol{\epsilon}_5  1, 
\boldsymbol{\epsilon}_6  1, \boldsymbol{\epsilon}_7  1, 
\boldsymbol{\epsilon}_8  1, \boldsymbol{\epsilon}_9  1,
\boldsymbol{\epsilon}_{10}  1, \boldsymbol{\epsilon}_{11}  1, 
\boldsymbol{\epsilon}_{12}  1, \boldsymbol{\epsilon}_{13}  1, 
\boldsymbol{\epsilon}_{14}  1, \boldsymbol{\epsilon}_{15}  1 )
\end{equation}

and 

\begin{equation}
\mathbb{Y}_3 := 
(\boldsymbol{\epsilon}_8  0, \boldsymbol{\epsilon}_9  0
\boldsymbol{\epsilon}_{10}  0, \boldsymbol{\epsilon}_{11}  0, 
\boldsymbol{\epsilon}_{12}  0, \boldsymbol{\epsilon}_{13}  0, 
\boldsymbol{\epsilon}_{14}  0, \boldsymbol{\epsilon}_{15} 0
\boldsymbol{\epsilon}_{16}  0 ).
\end{equation}

Then the  listing 
 $\mathbb{Y}_1 \circ \mathbb{Y}_2 \circ 
\mathbb{Y}_3 = 
(\boldsymbol{\eta}_1, \boldsymbol{\eta}_2, \ldots , \boldsymbol{\eta}_{32})$ of $V(G(5))$ is

\begin{center}
\begin{tabular}{ccccccc}
$\boldsymbol{\eta}_1 = 00000$, &
$\boldsymbol{\eta}_2 = 01000$, &
$\boldsymbol{\eta}_3 = 11000$, &
$\boldsymbol{\eta}_4 = 01100$, &
$\boldsymbol{\eta}_5 = 11100$, &
$\boldsymbol{\eta}_6 = 10100$, &
$\boldsymbol{\eta}_7 = 10010$, \\
$\boldsymbol{\eta}_{8} = 10001$, &
$\boldsymbol{\eta}_{9} = 00001$, &
$\boldsymbol{\eta}_{10} = 01001$, &
$\boldsymbol{\eta}_{11} = 11001$, &
$\boldsymbol{\eta}_{12} = 01101$, &
$\boldsymbol{\eta}_{13} = 11101$, &
$\boldsymbol{\eta}_{14} = 10101$, \\
$\boldsymbol{\eta}_{15} = 10011$, &
$\boldsymbol{\eta}_{16} = 00111$, & 
$\boldsymbol{\eta}_{17} = 10111$, & 
$\boldsymbol{\eta}_{18} = 11111$, &
$\boldsymbol{\eta}_{19} = 01111$, &
$\boldsymbol{\eta}_{20} = 11011$, &
$\boldsymbol{\eta}_{21} = 01011$, \\
$\boldsymbol{\eta}_{22} = 00011$, &
$\boldsymbol{\eta}_{23} = 00101$, &
$\boldsymbol{\eta}_{24} = 00110$, & 
$\boldsymbol{\eta}_{25} = 10110$, & 
$\boldsymbol{\eta}_{26} = 11110$, &
$\boldsymbol{\eta}_{27} = 01110$, &
$\boldsymbol{\eta}_{28} = 11010$, \\
$\boldsymbol{\eta}_{29} = 01010$, &
$\boldsymbol{\eta}_{30} = 00010$, &
$\boldsymbol{\eta}_{31} = 00100$, &
$\boldsymbol{\eta}_{32} = 10000$, \\

\end{tabular}
\end{center}

and it satisfies from {\rm (A1)} to {\rm (A4)}.

\end{example}

\begin{proof}(Proof of Theorem {\rmfamily \ref{augment-hamilton-cycle}}.)

Put $\mathbb{L}_2 := (00, 01, 11, 10)$ and 
$\mathbb{L}_3 := ( 000, 010, 110, 011, 111, 101, 001,  100)$ and they 
 satisfy {\rm (L1)}, {\rm (L2)} and {\rm (L3)}.

From Proposition {\rmfamily \ref{augmentation-prop-odd-to-even}},  
Proposition {\rmfamily \ref{augmentation-prop-even-to-odd}} and the inductive method, 
we can construct 
a listing $\mathbb{L}_{2k}$ for $V(G(2k))$ 
(resp. $\mathbb{L}_{2k+1}$ for $V(G(2k+1))$), 
which satisfies from {\rm (B1) } to {\rm (B4)} 
(resp. from {\rm (A1) } to {\rm (A4)}) for $k \ge 2$. 
Our  listings  $\mathbb{L}_{2k}$ and $\mathbb{L}_{2k+1}$  satisfy 
{\rm (L1)}, {\rm (L2)} and {\rm (L3)} for $k \ge 2$. 
Therefore we obtain  the Hamiltonian cycles for the square of the augmentation graphs.

\end{proof}

\begin{remark}\label{binary-dist-is-two}
Our listing in Theorem {\rmfamily \ref{augment-hamilton-cycle}} is not a Hamiltonian cycle, because the distance between the first element and the second element is $2$. 

\end{remark}

\section{A Gray code for $S_n (132, 312)$}\label{gray-code-permutation}

The $132$-$312$ avoiding permutations is called Gilbreath permutations. 
A permutation $\sigma = \sigma_1 \sigma_2 \ldots \sigma_n \in S_n$ with 
$\sigma_1 = k$, where $1 \le k \le n$,  is a  
$132$-$312$ avoiding permutation if and only if 
$\sigma$ is a shuffle of the sequences $k (k-1) \ldots 2 1$ and 
$(k+1) (k+2) \ldots (n-1) n$ with $\sigma_1 = k$, see Proposition 12 in \cite{simion-schmidt}.
For example, $5{\bf 67}43{\bf 8}2{\bf 9}1 \in S_{9}$ 
is a shuffle of $54321$ and ${\bf 6789}$ and 
it is a  $132$-$312$ avoiding permutation. 
In this section, we construct a Gray code for $S_n(132, 312)$ for
 $n \ge 2$.

First we construct a natural bijection from $V(G(n-1))$ to $S_n(132, 312)$ which 
preserves minimal changes of these objects. 
We note that our map is essentially the same bijection presented at the proof of Proposition 12 in 
\cite{simion-schmidt}. 

\begin{definition}\label{def-bin-to-132-312}

For $n \in \mathbb{N}_{\ge 2}$ and 
$\boldsymbol{\epsilon} = 
\epsilon_1 \epsilon_2 \ldots \epsilon_{n-1} \in V(G(n-1))$, 
define $\Psi_n( \boldsymbol{\epsilon})$ to be a positive integer sequence of length $n$, say 
$a_1 a_2 \ldots a_n$,    such that

{\rm (1)} $a_1$ is the cardinality  of $0$ in $\boldsymbol{\epsilon}$
 plus $1$,

{\rm (2)} \ if $\epsilon_{i-1} = 0$ with $i \ge 2$, 
then $a_i$ is the cardinality of $j$ such that 
$j \ge i-1$ and $\epsilon_j = 0$, 

{\rm (3)} \ if $\epsilon_{i-1} = 1$ with $i \ge 2$, 
then $a_i$ is $a_1$ plus the cardinality of $j$ 
such that $j \le i-1$ and $\epsilon_j = 1$.

\end{definition}

For example, $\Psi(1001011) = 45326178$ and 
$\Psi(0001011) = 54326178$.
It is easy to see the following Lemma. 

\begin{lemma}
Notation is as above, $a_1 a_2 \ldots a_n$ is a permutation of length $n$. Furthermore, 
it is a shuffle of 
$a_1 (a_1-1) \ldots 21$ and $(a_1 + 1) (a_1 + 2) \ldots (n-1) n$ and 
  hence it is a  $132$-$312$ avoiding permutation. 
Moreover,  $\Psi_n$ is a bijection from $G(n-1)$ to $S_n (132,312)$.

\end{lemma}

\begin{proposition}

If $\boldsymbol{\epsilon} = \epsilon_1 \epsilon_2 \cdots \epsilon_{n-1}$ and 
$\boldsymbol{\eta} = \eta_1 \eta_2 \cdots \eta_{n-1}$ are adjacent in 
$G(n-1)$, then the corresponding permutations 
$\Psi_n (\boldsymbol{\epsilon})$ and 
$\Psi_n (\boldsymbol{\eta})$ differ by an adjacent transposition.
\end{proposition}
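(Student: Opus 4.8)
The plan is to split the argument according to the two ways in which $\boldsymbol{\epsilon}$ and $\boldsymbol{\eta}$ can be joined by an edge of $G(n-1)$, and in each case to pin down the single adjacent transposition relating the two permutations. Write $\Psi(\boldsymbol{\epsilon}) = a_1 a_2 \cdots a_n$ and $\Psi(\boldsymbol{\eta}) = b_1 b_2 \cdots b_n$. Since $\Psi$ is already known to be a bijection onto $S_n(132,312)$, it is enough to show that $\Psi(\boldsymbol{\epsilon})$ and $\Psi(\boldsymbol{\eta})$ agree in all but one pair of adjacent coordinates, and that the two values in that pair are interchanged.

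First I would treat the edge of the first type, where $\boldsymbol{\epsilon}$ and $\boldsymbol{\eta}$ differ only in their leading bit; say $\epsilon_1 = 0$, $\eta_1 = 1$, and $\epsilon_j = \eta_j$ for $2 \le j \le n-1$. Flipping the leading bit raises the number of $1$'s by one, so $b_1 = a_1 + 1$. Substituting $\epsilon_1 = 0$ and $\eta_1 = 1$ into the defining formulas gives $(a_1, a_2) = (k+1, k+2)$ and $(b_1, b_2) = (k+2, k+1)$, where $k$ denotes the number of $1$'s in $\boldsymbol{\epsilon}$; thus the first two entries are interchanged. For each later coordinate $a_{m+1}$ with $m \ge 2$, the defining formula involves either $a_1$ together with the number of $0$'s among positions $1, \dots, m$ (when $\epsilon_m = 0$) or the number of $1$'s among positions $m, \dots, n-1$ (when $\epsilon_m = 1$). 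In the former case the increase of $a_1$ by one is exactly offset by the loss of one $0$ in the prefix when the leading bit changes from $0$ to $1$; in the latter case the suffix $\{ m, \dots, n-1 \}$ avoids position $1$ altogether. Either way $a_{m+1} = b_{m+1}$, so the two permutations differ precisely by the adjacent transposition of their first two entries.

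The second and more delicate case is an edge of the second type, where $\boldsymbol{\epsilon}$ and $\boldsymbol{\eta}$ differ by interchanging a $0$ and a $1$ at adjacent positions $i, i+1$; say $\epsilon_i = 0$, $\epsilon_{i+1} = 1$, $\eta_i = 1$, $\eta_{i+1} = 0$, with the words agreeing elsewhere. Here the total number of $1$'s is unchanged, so $a_1 = b_1$. For any coordinate $a_{m+1}$ with $m < i$ or $m > i+1$ I would verify that the count entering its formula is insensitive to the swap: a prefix $\{ 1, \dots, m \}$ or a suffix $\{ m, \dots, n-1 \}$ either contains both swapped positions (so the relevant total is preserved) or contains neither; hence these coordinates are fixed. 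The crux is the pair $a_{i+1}, a_{i+2}$. Using $\epsilon_i = 0$ and $\epsilon_{i+1} = 1$ one finds $a_{i+1} = a_1 + Z$ and $a_{i+2} = O$, where $Z = |\{ j \le i : \epsilon_j = 0 \}|$ and $O = |\{ j \ge i+1 : \epsilon_j = 1 \}|$.

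The main obstacle is the parallel computation for $\boldsymbol{\eta}$, which requires reindexing these two counting sets. Using $\eta_i = 1$ one reindexes $|\{ j \ge i : \eta_j = 1 \}| = 1 + |\{ j \ge i+2 : \epsilon_j = 1 \}| = O$, the last equality from $\epsilon_{i+1} = 1$; and using $\eta_{i+1} = 0$ one reindexes $|\{ j \le i+1 : \eta_j = 0 \}| = 1 + |\{ j \le i-1 : \epsilon_j = 0 \}| = Z$, the last equality from $\epsilon_i = 0$. These give $b_{i+1} = O = a_{i+2}$ and $b_{i+2} = a_1 + Z = a_{i+1}$, so the bit swap at positions $i, i+1$ corresponds exactly to the adjacent transposition of the permutation entries at positions $i+1, i+2$. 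Combining the two cases yields the claim.
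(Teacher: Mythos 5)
Your proof is correct and follows the same two-case decomposition as the paper: a flip of the leading bit corresponds to transposing the first two entries of the permutation, and an interchange of a $0$ and a $1$ at positions $i, i+1$ corresponds to transposing entries $i+1$ and $i+2$. The paper merely asserts these two correspondences without computation, whereas you supply the explicit counting arguments verifying them, so your write-up is a more detailed version of the same argument.
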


\begin{proof}

By assumption, $\boldsymbol{\eta}$ can be obtained from
 $\boldsymbol{\epsilon}$ by interchanging adjacent $0$ and $1$ or 
by changing the first position of $\boldsymbol{\epsilon}$.

\underline{Case 1.} 
We discuss the case when $\boldsymbol{\eta}$ is obtained by interchanging adjacent $0$ and $1$ in $\boldsymbol{\epsilon}$.
Without loss of generality, we can assume that 
$\eta_x= \epsilon_x$ for $x\neq i-1, i$, 
$\epsilon_{i-1} = \eta_{i} = 1$ and 
$\epsilon_{i} = \eta_{i-1} = 0$ for some $1 \le i \le n-1$, i.e.,

\begin{eqnarray*}
\boldsymbol{\epsilon} &=& \epsilon_1 \epsilon_2 \ldots
\epsilon_{i-2} {\bf 1} {\bf 0} 
  \epsilon_{i+1} \ldots \epsilon_{n-1},  \\
\boldsymbol{\eta} &=& \epsilon_1 \epsilon_2 \ldots 
\epsilon_{i-2}  {\bf 0} {\bf 1}  \epsilon_{i+1} \ldots \epsilon_{n-1}.
\end{eqnarray*}

Set $\Psi_n (\boldsymbol{\epsilon}) := a_1 a_2 \ldots a_n$. 
If $\Psi_n (\boldsymbol{\eta}) = b_1 b_2 \ldots b_n$, then 
by Definition
{\rmfamily \ref{def-bin-to-132-312}}, we see 
$a_x = b_x$ for $x \neq i, i+1$, 
$a_i = b_{i+1}$ and $a_{i+1} = b_i$. Hence 
$\Psi_n (\boldsymbol{\epsilon})$ and 
$\Psi_n (\boldsymbol{\eta})$ differ by an adjacent transposition.

\underline{Case 2.} We discuss the case when $\boldsymbol{\eta}$ is obtained 
by changing the first position of  $\boldsymbol{\epsilon}$.
Without loss of generality, we can assume that 
$\eta_i= \epsilon_i$ for $ 2 \le i \le n-1$ and  
$\epsilon_{1} = 0$ and $\eta_{1} =  1$, i.e.,

\begin{eqnarray*}
\boldsymbol{\epsilon} &=& {\bf 0} \ \epsilon_2 \epsilon_3 \ldots
  \epsilon_{n-1}, \\
\boldsymbol{\eta} &=& {\bf  1} \ \epsilon_2 \epsilon_3 \ldots 
\epsilon_{n-1}.
\end{eqnarray*}

Set $\Psi_n (\boldsymbol{\epsilon}) := a_1 a_2 \ldots a_n$ and 
suppose that  $\Psi_n (\boldsymbol{\eta}) = b_1 b_2 \ldots b_n$. 
If $\epsilon_{i-1}= 0$ with $i \ge 3$, then we see $a_i = b_i$ by 
Definition {\rmfamily \ref{def-bin-to-132-312}}. 
If $\epsilon_{i-1} = 1$ with $i \ge 3$, then $a_i$ (resp. $b_i$) 
is $n$ minus the cardinality of $j$ with $j \ge i$ and 
$\epsilon_j = 1$ (resp. $\eta_j = 1$) and  hence we see $a_i = b_i$. 
Therefore we see $a_p = b_p$ for $p \ge 3$.
Let $k$ be the cardinality of $0$ in $\boldsymbol{\epsilon}$, 
then $a_1 = (k+1), a_2 = k, b_1 = k$ and $b_2 = (k+1)$. 
 Hence 
$\Psi_n (\boldsymbol{\epsilon})$ and 
$\Psi_n (\boldsymbol{\eta})$ differ by an adjacent transposition.

\end{proof}

From Theorem {\rmfamily \ref{augment-hamilton-cycle}} and the definition of $\Psi_n$, we obtain the following result.

\begin{theorem}\label{213-312-gray-code}

For $n \ge 3$, there exists a Gray code 

\begin{equation}
(\boldsymbol{a}_1, \boldsymbol{a}_2, \cdots , \boldsymbol{a}_{2^{n-1}})
\end{equation}

for $S_n(213, 312)$ which satisfies: 

{\rm (P1)}
$\boldsymbol{a}_1 = n (n-1) \ldots 321$, the reversal of the identity permutation, 
 and 
$\boldsymbol{a}_{2^{n-1}} = (n-1) n (n-2) (n-3)  \ldots  321$, the first (resp. second) entry is 
$(n-1)$ (resp. $ n$) and the $i$-th entry is $(n-i+1)$ for $i \ge 3$,;

{\rm (P2)}
two consecutive elements differ by at most $2$ adjacent transpositions;

{\rm (P3)} 
if $ \boldsymbol{a}_i$ and $ \boldsymbol{a}_{i+1}$ differ by $2$ adjacent 
transpositions, then  
$ \boldsymbol{a}_{i-1}$ and $ \boldsymbol{a}_i $ differ by an adjacent transposition and also  
$ \boldsymbol{a}_{i+1}$ and $ \boldsymbol{a}_{i+2} $ do for $ 1 \le i \le 2^{n-1}-1$.

\end{theorem}

\begin{remark}
For $p_1 p_2 \ldots p_n \in S_n$, its reversal is defined by $p_n p_{n-1} \ldots p_1 \in S_n$ 
\cite{simion-schmidt}.
By applying the reversal for every element in the Gray code in 
Theorem  {\rmfamily \ref{213-312-gray-code}}, we obtain a Gray code which starts from 
$123 \cdots (n-1) n$ and ends at $123 \cdots (n-2) n (n-1)$ which satisfies  
{\rm P(2)} and {\rm (P3)} in Theorem {\rmfamily \ref{213-312-gray-code}}.

\end{remark}

\begin{corollary}
For $n \ge 3$, there is a Gray code for $S_n(213, 312)$ such that any successive 
permutations differ by at most two adjacent transpositions. 

\end{corollary}

\begin{remark}

On our Gray code in Theorem {\rmfamily \ref{213-312-gray-code}}, 
the first permutation and the second permutation differ by two adjacent transpositions. 
If one obtain a Hamiltonian cycle for $G(n-1)$, then one gets a Gray code for $S_n (213,312)$ such that 
two successive permutations differ by only one  adjacent transposition. 
\end{remark}

\begin{example}

From Example {\rmfamily \ref{01-listing-n=5-example}} 
and $\Psi_6$,  we obtain a following 
listing $(\boldsymbol{a}_1, \boldsymbol{a}_2, \cdots , \boldsymbol{a}_{32})$ 
for $S_6 (132,312)$.

\begin{center}
\begin{tabular}{cccccc}
$\boldsymbol{a}_1 = 654321$, &
$\boldsymbol{a}_2 = 546321$, &
$\boldsymbol{a}_3 = 456321$, &
$\boldsymbol{a}_4 = 435621$, &
$\boldsymbol{a}_5 = 345621$, &
$\boldsymbol{a}_6 = 453621$, \\
$\boldsymbol{a}_7 = 453261$, &
$\boldsymbol{a}_{8} = 453216$, &
$\boldsymbol{a}_{9} = 543216$, &
$\boldsymbol{a}_{10} = 435216$, &
$\boldsymbol{a}_{11} = 345216$, &
$\boldsymbol{a}_{12} = 324516$, \\
$\boldsymbol{a}_{13} = 234516$, &
$\boldsymbol{a}_{14} = 342516$, &
$\boldsymbol{a}_{15} = 342156$, &
$\boldsymbol{a}_{16} = 321456$, & 
$\boldsymbol{a}_{17} = 231456$, & 
$\boldsymbol{a}_{18} = 123456$, \\
$\boldsymbol{a}_{19} = 213456$, &
$\boldsymbol{a}_{20} = 234156$, &
$\boldsymbol{a}_{21} = 324156$, &
$\boldsymbol{a}_{22} = 432156$, &
$\boldsymbol{a}_{23} = 432516$, &
$\boldsymbol{a}_{24} = 432561$, \\ 
$\boldsymbol{a}_{25} = 342561$, & 
$\boldsymbol{a}_{26} = 234561$, &
$\boldsymbol{a}_{27} = 324561$, &
$\boldsymbol{a}_{28} = 345261$, &
$\boldsymbol{a}_{29} = 435261$, &
$\boldsymbol{a}_{30} = 543261$, \\
$\boldsymbol{a}_{31} = 543621$, &
$\boldsymbol{a}_{32} = 564321$. &
\end{tabular}

\end{center}

\end{example}

\section{A Gray code for  $\mathcal{A}_n (001)$ }

In this section, we construct a Gray code of strong distance $2$ for $\mathcal{A}_n(001)$ for $n \in \mathbb{N}$.
An ascent sequence avoids $001$ pattern if and only if it starts with a strictly increasing ascent sequence, we denote it by $012 \ldots k$, followed by an arbitrary weakly decreasing sequence whose letters are smaller than or equal to $k$ 
\cite{duncan-steingrimsson}.
First, we define a map $\Phi_{\mathbb{F}_2^{n-1} \rightarrow \mathcal{A}_n(001)}$ from  
$\mathbb{F}^{n-1}_2$ to $\mathcal{A}_n(001)$.

For a binary word $\boldsymbol{\alpha} = \alpha_1 \alpha_2 \ldots \alpha_{n-1}$,  we say that 
the $x$-th entry of $0$   is  
in the $i$-th position 
when $\alpha_i = 0$  and
the cardinality of $j$ with $j \le i$ and $\alpha_j = 0$ is 
$x$.
 For example, 
The $6$-th  entry of $0$  in $00010101{\bf 0}11001101$ 
is in the $9$-th position, we denote it in bold style.

For $\boldsymbol{\epsilon} \in \mathbb{F}_2^{n-1}$, we can write 

\begin{displaymath}
\boldsymbol{\epsilon} = 
\underbrace{11 \ldots 1}_{p \ \rm{times} \ \ \ \ \ }  0
\underbrace{11 \ldots 1}_{(k_1 - k_2)  \  \rm{times}} 0  
\underbrace{11 \ldots 1}_{(k_2 - k_3) \ \rm{times}} 0
\cdots 
0 
\underbrace{11 \ldots 1}_{(k_{q-1} - k_q ) \ \rm{times}}  0 
\underbrace{11 \ldots 1}_{ \ \ \ \ \ k_q \  \rm{times}},
\end{displaymath}

for some $n-1 \ge k_1 \ge  k_2 \ge \ldots \ge k_q \ge 0 $
 and $ p, q \ge 0$ such that 
$ k_1 + p + q = n-1$, where 
 $k_i$ is the number of $1$ on the right hand side of 
the $i$-th entry of $0$. We have  
$k_i = k_{i+1}$ when the $i$-th entry of $0$   is adjacent to 
the $(i+1)$-th entry of $0$. \\

We set $\phi^{(n)}_1 (\boldsymbol{\epsilon}) := 012 \ldots (k_1 + p)$ and 
$\phi^{(n)}_2 (\boldsymbol{\epsilon}) := k_1 k_2 \ldots  k_q$. 
Remark that the cardinality of $1$ is $(k_1 + p)$ and that of $0$ is $q$. 
Define $\Phi_{\mathbb{F}_2^{n-1} \rightarrow \mathcal{A}_n(001)} (\boldsymbol{\epsilon})$ to be the concatenation of 
 $\phi^{(n)}_1 (\boldsymbol{\epsilon})$ and $\phi^{(n)}_2 (\boldsymbol{\epsilon})$, that is 

\begin{displaymath}
\Phi_{\mathbb{F}_2^{n-1} \rightarrow \mathcal{A}_n(001)}
 (\boldsymbol{\epsilon}) :=012 \ldots (k_1 + p) k_1 k_{2} \ldots  k_q
\end{displaymath}

and it is a $001$ avoiding ascent sequence. 

For example, for
 $\boldsymbol{\epsilon} = 010111010011 \in \mathbb{F}_2^{12}$, 
we see $k_1 = 7,  k_2 = 6, k_3 =3, k_4 = k_5 = 2, p=0$ and  $q=5$ and 
we have $\phi^{(13)}_1 (\boldsymbol{\epsilon}) =01234567$ and 
$\phi^{(13)}_2 (\boldsymbol{\epsilon}) = 76322$. Hence we obtain   
$\Phi_{\mathbb{F}_2^{12} \rightarrow \mathcal{A}_{13} (001)} ( 010111010011) = 0123456776322$. 
We state the above observation as  Lemma. 

\begin{lemma}\label{001-avoider-key-fact}
For a binary word $\boldsymbol{\epsilon} = \epsilon_1 \epsilon_2 \ldots \epsilon_{n-1}$, 
suppose that $\phi^{(n)}_2 (\boldsymbol{\epsilon}) = k_1 k_{2} \ldots  k_q$.

{\rm (1)} \ The number of $0$ in $\boldsymbol{\epsilon}$ is $q$.

{\rm (2)} \  
For $ 1 \le i \le q$, 
the cardinality of $1$ in $\boldsymbol{\epsilon}$ 
 on the right hand side of the $i$-th entry of 
$0$ equals $k_i$.   

{\rm (3) } $\phi^{(n)}_1 (\boldsymbol{\epsilon}) = 012 \ldots (n-1-q)$, where $(n-1-q)$ is the number of $1$ in $\boldsymbol{\epsilon}$.
\end{lemma}

Next, we define a map 
$\Psi_{\mathcal{A}_n(001) \rightarrow \mathbb{F}_2^{n-1} }$ from $\mathcal{A}_n(001)$ to 
$\mathbb{F}^{n-1}_2$.
A $001$ avoiding  ascent sequence of length $n$ can be written as 
 $012 \ldots r x_{1} x_{2} \ldots x_s$, where 
$012 \ldots r$ is a strictly increasing ascent sequence and 
$x_{1} x_{2} \ldots x_s$ is a weakly decreasing sequence whose letters are 
smaller than or equal to $r$ with $r+s = n-1$.

Let 
$\Psi_{\mathcal{A}_n(001) \rightarrow \mathbb{F}_2^{n-1} }
 (012 \ldots r x_{1} x_{2} \ldots x_s)$ be the binary words of length 
$(n-1)$ such that:

(1) the cardinality of $0$ (resp. $1$) is $s$ (resp. $r$); 

(2) the cardinality of $1$ on the right hand side of the $i$-th entry of 
$0$ equals $x_i$ for $ 1 \le i \le s$, i.e.,

\begin{displaymath}
\underbrace{11 \ldots 1}_{ (r - x_1 ) \ \rm{times}} 0
\underbrace{11 \ldots 1}_{ (x_1 - x_2) \  \rm{times}} 0 
\underbrace{11 \ldots 1}_{ (x_2 - x_3) \  \rm{times}} 0 
\ldots 
0
\underbrace{11 \ldots 1}_ {(x_{s-1} - x_{s}) \ \rm{times}} 0 
\underbrace{11 \ldots 1}_{ \ \ \ \ \ x_s \ \rm{times}}.
\end{displaymath}

For  $0123422100 \in \mathcal{A}_{10}(001)$, the strictly increasing  sequence part is $01234$ and the weakly decreasing part is $22100$.  From this, we see $r=4, s=5$, $x_1 = x_2 =2, x_3=1$ and 
$x_4 = x_5 = 0$. We obtain

\begin{displaymath}
\underbrace{11}_{ r - x_1 = 2  \  \rm{times}} 0
\underbrace{\phi}_{ x_1 - x_2 = 0 \  \rm{times}} 0 
\underbrace{1}_{ x_2 - x_3 = 1 \  \rm{times}} 0 
\underbrace{1}_ {x_3 - x_4 = 1 \ \rm{times}} 0 
\underbrace{\phi}_{ x_4 - x_5 = 0 \ \rm{times}}0
\underbrace{\phi}_{x_5 = 0 \ \rm{times}}
\end{displaymath}

and $\Phi_{\mathcal{A}_{10} (001) \rightarrow \mathbb{F}_2^{9} }  (0123422100) = 110010100$. 
By the construction, the following Lemma is straightforward to check.

\begin{lemma}
$\Phi_{\mathbb{F}_2^{n-1} \rightarrow \mathcal{A}_n(001)} \circ  
\Psi_{\mathcal{A}_n(001) \rightarrow  \mathbb {F}_2}$
 (resp. $\Psi_{ \mathcal{A}_n(001) \rightarrow \mathbb{F}_2^{n-1}} \circ 
\Phi_{\mathbb{F}_2^{n-1} \rightarrow \mathcal{A}_n(001)}$) is an identity map on 
$\mathcal{A}_n(001)$ (resp.  $\mathbb{F}_2^{n-1}$). 
Hence $\Psi_{\mathbb{F}_2^{n-1} \rightarrow \mathcal{A}_n(001)}$ and 
$\Phi_{\mathbb{F}_2^{n-1} \rightarrow \mathcal{A}_n(001)}$ are bijections.

\end{lemma}

\begin{proposition}\label{main-001-avoider}
If $\boldsymbol{\epsilon} = \epsilon_1 \epsilon_2 \ldots \epsilon_{n-1}$ and 
$\boldsymbol{\eta} = \eta_1 \eta_2 \ldots \eta_{n-1}$ are adjacent in $G(n-1)$, then the strong distance of 
$\Psi_{\mathbb{F}_2^{n-1} \rightarrow \mathcal{A}_n(001)} (\boldsymbol{\epsilon})$ and 
$\Psi_{\mathbb{F}_2^{n-1} \rightarrow \mathcal{A}_n(001)} (\boldsymbol{\eta})$ is $1$.

\end{proposition}

\begin{proof}

By the assumption, $\boldsymbol{\eta}$ 
can be obtained from $\boldsymbol{\epsilon}$ by interchanging adjacent $0$ and $1$ 
in $\boldsymbol{\epsilon}$ or by changing the first position of 
$\boldsymbol{\epsilon}$. \\

\underline{Case 1.} We discuss the case when $\boldsymbol{\eta}$ is obtained by interchanging adjacent $0$ and $1$ in $\boldsymbol{\epsilon}$.
Without loss of generality, we can assume that 
$\eta_x= \epsilon_x$ for $x\neq i, i+1$, 
$\epsilon_i = \eta_{i+1} = 1$ and 
$\epsilon_{i+1} = \eta_i = 0$.

Suppose that  the $x$-th entry of $0$  in 
$\boldsymbol{\epsilon}$ is  in the $(i+1)$-th position. 
The the $x$-th entry of $0$ in $\boldsymbol{\eta}$ is in the  $i$-th position, i.e.,

\begin{eqnarray*}
\boldsymbol{\epsilon} &=& \epsilon_1 \epsilon_2 \ldots
\epsilon_{i-1} \ \ \ \ \ \ \ \ \ 
 {\bf 1} \underbrace{0}_{x- \rm{th \ entry \ of } \  0} \ 
  \epsilon_{i+2} \ldots \epsilon_{n-1}, \\
\boldsymbol{\eta} &=& \epsilon_1 \epsilon_2 \ldots 
\epsilon_{i-1} \  \underbrace{0}_{x- \rm{th \ entry \ of } \  0} 
 {\bf 1} \ \ \ \ \ \ \ \ \  \epsilon_{i+2} \ldots \epsilon_{n-1}.
\end{eqnarray*}

From (3) of Lemma {\rmfamily \ref{001-avoider-key-fact}},  we have 
$\phi^{(n)}_1 (\boldsymbol{\epsilon}) = \phi^{(n)}_1 (\boldsymbol{\eta})$.
Set $\phi^{(n)}_2 (\boldsymbol{\epsilon}) = \theta_1 \theta_{2} \ldots \theta_r$ and 
 $\phi^{(n)}_2 (\boldsymbol{\eta}) = \tau_1 \tau_2 \ldots \tau_s$. 
From {\rm (1)} of Lemma {\rmfamily \ref{001-avoider-key-fact}}, we see $r = s$.
We have $\theta_u = \tau_u$ for $1 \le u \neq x \le r$ 
and we get $\tau_x = \theta_x +  1$ from {\rm (2)} of Lemma {\rmfamily \ref{001-avoider-key-fact}}.  Hence the strong distance of  
$\Phi_{\mathbb{F}_2^{n-1} \rightarrow \mathcal{A}_n(001)} (\boldsymbol{\epsilon})$ and 
$\Phi_{\mathbb{F}_2^{n-1} \rightarrow \mathcal{A}_n(001)} (\boldsymbol{\eta})$ is $1$. \\

\underline{Case 2.} We discuss the case when $\boldsymbol{\eta}$ is obtained 
by changing the first position of  $\boldsymbol{\epsilon}$.
Without loss of generality, we can assume that 
$\eta_i= \epsilon_i$ for $ 2 \le i \le n-1$, 
$\epsilon_{1} = 1$ and $\eta_{1} =  0$, i.e.,

\begin{eqnarray*}
\boldsymbol{\epsilon} &=&
{\bf 1}  \ \epsilon_2 \epsilon_3 \ldots
  \epsilon_{n-1},  \\
\boldsymbol{\eta} &=& {\bf 0} \ \epsilon_2 \epsilon_3 \ldots 
\epsilon_{n-1}.
\end{eqnarray*}

Suppose that  the number of $1$ (resp. $0$) in 
$\boldsymbol{\epsilon}$ is $t$ (resp. $n-1-t$). 
 Then  the number of $1$ (resp. $0$) in 
$\boldsymbol{\eta}$ is $t-1$ (resp. $n-t$).

From \rm{(3)} of Lemma 
{\rmfamily \ref{001-avoider-key-fact}}, 
we see $\phi^{(n)}_1 (\boldsymbol{\epsilon}) = 012 \ldots t$ and 
$\phi^{(n)}_1 (\boldsymbol{\eta}) = 012 \ldots (t-1)$. 
Also $\phi^{(n)}_2 (\boldsymbol{\epsilon})$ has $n-1-t$ letters and 
set $\phi^{(n)}_2 (\boldsymbol{\epsilon}) = b_1 b_2 \ldots  b_{n-1-t}$. 
Then 
$\phi^{(n)}_2 (\boldsymbol{\eta}) = (t-1) b_{1} b_2 \ldots  b_{n-1-t}$, because the first entry of $0$ 
in  $\boldsymbol{\eta}$  is at the first position of $\boldsymbol{\eta}$ and 
$\eta_i = \epsilon_i$ for  $2 \le i \le n-1$.
Hence we have

\begin{eqnarray*}
\Phi_{\mathbb{F}_2^{n-1} \rightarrow \mathcal{A}_n(001)}(\boldsymbol{\epsilon}) &=& 012 \ldots (t-1) \ \ \ \ 
t \ \ \  b_1 b_2 \ldots  b_{n-1-t}, \\
\Phi_{\mathbb{F}_2^{n-1} \rightarrow \mathcal{A}_n(001)} (\boldsymbol{\eta}) &=&     012 \ldots (t-1) 
 (t-1) b_{1} b_2 \ldots b_{n-1-t}
\end{eqnarray*}

and the strong distance of  $\Phi_{\mathbb{F}_2^{n-1} \rightarrow \mathcal{A}_n(001)} (\boldsymbol{\epsilon})$ and 
$\Phi_{\mathbb{F}_2^{n-1} \rightarrow \mathcal{A}_n(001)} (\boldsymbol{\eta})$ is $1$. 

\end{proof}

From Theorem {\rmfamily \ref{augment-hamilton-cycle}}, 
 Remark {\rmfamily \ref{binary-dist-is-two}} and 
Proposition {\rmfamily \ref{main-001-avoider}}, 
we obtain the following result.

\begin{theorem}\label{A(001)-gray-code-main}

For $n \ge 3$, there exists a Gray code $( \boldsymbol{a}_1, \boldsymbol{a}_2, \ldots 
, \boldsymbol{a}_{2^{n-1}} )$ of strong distance $2$
 for $\mathcal{A}_n (001)$ which satisfies: 

\begin{enumerate}

\item  $\boldsymbol{a}_1 = 00 \ldots 0$, all entries are $0$, and 
$\boldsymbol{a}_{2^{n-1}} = 0100 \ldots 00$, the second entry is $1$ and 
the rest of the entries are all $0$; 

\item $d_{\mathrm{str}} (\boldsymbol{a}_1,  \boldsymbol{a}_2) = 2$; 

\item  if  $d_{\mathrm{str}} (\boldsymbol{\alpha}_i, 
\boldsymbol{\alpha}_{i+1}) = 2$, then 
 $d_{\mathrm{str}} (\boldsymbol{\alpha}_{i-1} , \boldsymbol{\alpha}_{i}) 
= d_{\mathrm{str}} (\boldsymbol{\alpha}_{i+1}, \boldsymbol{\alpha}_{i+2})
=1$ for $1 \le i \le 2^{n-1}-1$.

\end{enumerate}

\end{theorem}

\begin{corollary}
For $n \ge 3$, 
$\mathcal{A}_n(001)$ has a Gray code of strong distance  $2$
 which starts from 
$00 \ldots 0$ and ends at $0100 \ldots 0$. 

\end{corollary}

\begin{remark}
For $n \ge 3$, 
if one obtain a Hamiltonian cycle for $G(n-1)$, then one gets a Gray code of strong distance $1$ for 
$\mathcal{A}_n (001)$.
\end{remark}

\begin{example}
By applying $\Phi_{\mathbb{F}_2^{5} \rightarrow \mathcal{A}_6(001)}$ to 
  the listing in Example {\rmfamily \ref{01-listing-n=5-example}}, 
we have a Gray code 
 $(\boldsymbol{a}_1, \boldsymbol{a}_2, \cdots , \boldsymbol{a}_{32})$
for $\mathcal{A}_6(001)$, where

\begin{center}
\begin{tabular}{cccccc}
$\boldsymbol{a}_1 = 000000$, &
$\boldsymbol{a}_2 = 011000$, &
$\boldsymbol{a}_3 = 012000$, &
$\boldsymbol{a}_4 = 012200$, &
$\boldsymbol{a}_5 = 012300$, &
$\boldsymbol{a}_6 = 012100$, \\
$\boldsymbol{a}_7 = 012110$, &
$\boldsymbol{a}_{8} = 012111$, &
$\boldsymbol{a}_{9} = 011111$, &
$\boldsymbol{a}_{10} =012211$, &
$\boldsymbol{a}_{11} =012311$, &
$\boldsymbol{a}_{12} =012331$, \\
$\boldsymbol{a}_{13} =012341$, &
$\boldsymbol{a}_{14} =012321$, &
$\boldsymbol{a}_{15} =012322$, &
$\boldsymbol{a}_{16} =012333$, & 
$\boldsymbol{a}_{17} =012343$, & 
$\boldsymbol{a}_{18} =012345$, \\
$\boldsymbol{a}_{19} = 012344$, &
$\boldsymbol{a}_{20} = 012342$, &
$\boldsymbol{a}_{21} = 012332$, &
$\boldsymbol{a}_{22} = 012222$, &
$\boldsymbol{a}_{23} = 012221 $, &
$\boldsymbol{a}_{24} = 012220$, \\ 
$\boldsymbol{a}_{25} = 012320$, & 
$\boldsymbol{a}_{26} = 012340$, &
$\boldsymbol{a}_{27} = 012330$, &
$\boldsymbol{a}_{28} = 012310$, &
$\boldsymbol{a}_{29} = 012210$, &
$\boldsymbol{a}_{30} = 011110$, \\
$\boldsymbol{a}_{31} =011100 $, &
$\boldsymbol{a}_{32} = 010000$. &
\end{tabular}
\end{center}

\end{example}

\section{A Gray code for $\mathcal{A}_n (010)$ }

In this section, we construct a Gray code of strong distance $2$ for $\mathcal{A}_n(010)$ for $n \in \mathbb{N}$.
An ascent sequence avoids $010$ pattern if and only if 
it is a weakly increasing ascent sequence \cite{duncan-steingrimsson}.

For a binary word $\boldsymbol{\epsilon} = \epsilon_1 \epsilon_2 \ldots \epsilon_{n-1}$, let 
 $a_1 a_2 \ldots a_n$ be the sequence  such that $a_1 =0$ and  
 $a_i$ is the number of $1$ in $\epsilon_{n-i+1} \epsilon_{n-i+2} \ldots 
\epsilon_{n-1}$ for $2 \le i \le n$. The sequence is weakly increasing 
and hence it is a $010$ avoiding ascent sequence. 
Define 
$\Phi_{\mathbb{F}_2^{n-1} \rightarrow \mathcal{A}_n(010)}  (\boldsymbol{\epsilon}) := a_1 a_2 \ldots a_n$.   
  Obviously, it is a bijection from $\mathbb{F}_2^{n-1}$ to 
$\mathcal{A}_n (010)$. 
For example, 
$\Phi_{\mathbb{F}_2^{n-1} \rightarrow \mathcal{A}_n(010)} 
 (100110100) = 0001123334$.

\begin{proposition}\label{010-avoider-main}
If $\boldsymbol{\epsilon} = \epsilon_1 \epsilon_2 \ldots \epsilon_{n-1}$ and 
$\boldsymbol{\eta} = \eta_1 \eta_2 \ldots \eta_{n-1}$ are adjacent in $G(n-1)$, then the strong distance of 
$\Psi_{\mathbb{F}_2^{n-1} \rightarrow \mathcal{A}_n(010)}  (\boldsymbol{\epsilon})$ and 
$\Psi_{\mathbb{F}_2^{n-1} \rightarrow \mathcal{A}_n(010)}  (\boldsymbol{\eta})$ is $1$.
\end{proposition}

\begin{proof}

By the assumption, $\boldsymbol{\eta}$ 
can be obtained from $\boldsymbol{\epsilon}$ by interchanging adjacent $0$ and $1$ 
in $\boldsymbol{\epsilon}$ or by changing the first position of $\boldsymbol{\epsilon}$. \\

\underline{Case 1.} We consider the case where $\boldsymbol{\eta}$ is obtained by interchanging adjacent $0$ and $1$ in $\boldsymbol{\epsilon}$.
Without loss of generality, we can assume that 
$\eta_x= \epsilon_x$ for $x\neq i, i+1$, 
$\epsilon_i = \eta_{i+1} = 1$ and 
$\epsilon_{i+1} = \eta_i = 0$ for some $1 \le i \le (n-1)$, i.e.,

\begin{eqnarray*}
\boldsymbol{\epsilon} &=& \epsilon_1 \epsilon_2 \ldots
\epsilon_{i-1} {\bf 1} {\bf 0} 
  \epsilon_{i+2} \ldots \epsilon_{n-1}, \\
\boldsymbol{\eta} &=& \epsilon_1 \epsilon_2 \ldots 
\epsilon_{i-1}  {\bf 0} {\bf 1}  \epsilon_{i+2} \ldots \epsilon_{n-1}.
\end{eqnarray*}

Put $\Phi (\boldsymbol{\epsilon}) = 
a_1 a_2  \ldots  a_n$. Then  
$a_{n-i} = a_{n-i-1}$ and $a_{n-i+1} = a_{n-i}+1$. 
If $\Phi (\boldsymbol{\epsilon}) = b_1 b_2 \ldots b_n$, then by the 
construction, we see 
$b_k = a_k$ for $k \neq (n-i), (n-i+1)$. 
Also we have $b_{n-i} = b_{n-i-1} + 1$ and $b_{n-i+1} = b_{n-i}$ and 
hence we obtain $a_{n-i+1} = b_{n-i+1}$ and $a_{n-i} = b_{n-i} -1$.
Therefore the  
 strong distance of 
$\Phi_{\mathbb{F}_2^{n-1} \rightarrow \mathcal{A}_n(010)}  (\boldsymbol{\epsilon})$ and 
$\Phi_{\mathbb{F}_2^{n-1} \rightarrow \mathcal{A}_n(010)}  (\boldsymbol{\eta})$ is $1$.\\

\underline{Case 2.} We discuss the case where $\boldsymbol{\eta}$ is obtained 
by changing the first position of  $\boldsymbol{\epsilon}$.
Without loss of generality, we can assume that 
$\eta_i= \epsilon_i$ for $ 2 \le i \le (n-1)$, 
$\epsilon_{1} = 0$ and $\eta_{1} =  1$, i.e.,

\begin{eqnarray*}
\boldsymbol{\epsilon} &=& {\bf  0} \ \epsilon_2 \epsilon_3 \ldots
  \epsilon_{n-1},  \\
\boldsymbol{\eta} &=& {\bf  1} \ \epsilon_2 \epsilon_3 \ldots 
\epsilon_{n-1}.
\end{eqnarray*}

If $\Phi_{\mathbb{F}_2^{n-1} \rightarrow \mathcal{A}_n(010)} (\boldsymbol{\epsilon}) = 
	a_1 a_2  \ldots  a_{n-1} a_n$, then we see    
$\Phi_{\mathbb{F}_2^{n-1} \rightarrow \mathcal{A}_n(010)} (\boldsymbol{\eta}) = 
a_1 a_2  \ldots  a_{n-1} (a_{n}+1)$.   
Therefore  the strong distance of 
$\Phi_{\mathbb{F}_2^{n-1} \rightarrow \mathcal{A}_n(001)} (\boldsymbol{\epsilon})$ and 
$\Phi_{\mathbb{F}_2^{n-1} \rightarrow \mathcal{A}_n(001)} (\boldsymbol{\eta})$ is $1$.

\end{proof}

From Theorem {\rmfamily \ref{augment-hamilton-cycle}}, 
 Remark {\rmfamily \ref{binary-dist-is-two}} and 
Proposition {\rmfamily \ref{010-avoider-main}}, 
we obtain the following result.

\begin{theorem}\label{A(010)-gray-code-main}

For $n \ge 3$, there is a Gray code $( \boldsymbol{\alpha}_1, 
\boldsymbol{\alpha}_2, \ldots , \boldsymbol{\alpha}_{2^{n-1}} )$ of strong 
distance $2$ for 
$\mathcal{A}_n (010)$ which satisfies: 

\begin{enumerate}

\item  $\boldsymbol{\alpha}_1 = 00 \ldots 0$, all entries are $0$, 
and $\boldsymbol{\alpha}_{2^{n-1}} = 00 \ldots 01$, the last entry 
is $1$ and the remaining entries are all $0$; 

\item $d_{\mathrm{str}} (\boldsymbol{a}_1, \boldsymbol{a}_2 ) =2$;

\item  if  $d_{\mathrm{str}} (\boldsymbol{\alpha}_i, 
\boldsymbol{\alpha}_{i+1}) = 2$, then 
 $d_{\mathrm{str}} (\boldsymbol{\alpha}_{i-1} , \boldsymbol{\alpha}_{i}) 
= d_{\mathrm{str}} (\boldsymbol{\alpha}_{i+1}, \boldsymbol{\alpha}_{i+2})
=1$ for $1 \le i \le 2^{n-1}-1$. 

\end{enumerate}
\end{theorem}

\begin{corollary}
For $n \ge 3$, 
$\mathcal{A}_n(010)$ has a Gray code of strong distance $2$
 which starts from 
$00 \ldots 0$ and ends at $000 \ldots 01$. 
\end{corollary}

\begin{remark}
For $n \ge 3$, if one obtain a Hamiltonian cycle for $G(n-1)$, then one gets a Gray code 
of strong distance $1$ for $\mathcal{A}_n (010)$.

\end{remark}

\begin{example}
By applying $\Phi_{\mathbb{F}_2^{5} \rightarrow \mathcal{A}_6 (010)}$ 
to   the listing in Example {\rmfamily \ref{01-listing-n=5-example}}, 
we have a Gray code 
 $(\boldsymbol{a}_1, \boldsymbol{a}_2, \cdots , \boldsymbol{a}_{32})$
for $\mathcal{A}_6(010)$, where 

\begin{center}
\begin{tabular}{cccccc}
$\boldsymbol{a}_1 = 000000$, &
$\boldsymbol{a}_2 = 000011$, &
$\boldsymbol{a}_3 = 000012$, &
$\boldsymbol{a}_4 = 000122$, &
$\boldsymbol{a}_5 = 000123$, &
$\boldsymbol{a}_6 = 000112$, \\
$\boldsymbol{a}_7 = 001112$, &
$\boldsymbol{a}_{8} = 011112$, &
$\boldsymbol{a}_{9} = 011111$, &
$\boldsymbol{a}_{10} =011122$, &
$\boldsymbol{a}_{11} =011123$, &
$\boldsymbol{a}_{12} =011233$, \\
$\boldsymbol{a}_{13} =011234$, &
$\boldsymbol{a}_{14} =011223$, &
$\boldsymbol{a}_{15} =012223$, &
$\boldsymbol{a}_{16} =012333$, & 
$\boldsymbol{a}_{17} =012334$, & 
$\boldsymbol{a}_{18} =012345$, \\
$\boldsymbol{a}_{19} =012344$, &
$\boldsymbol{a}_{20} =012234$, &
$\boldsymbol{a}_{21} =012233$, &
$\boldsymbol{a}_{22} =012222$, &
$\boldsymbol{a}_{23} =011222 $, &
$\boldsymbol{a}_{24} =001222$, \\ 
$\boldsymbol{a}_{25} =001223$, & 
$\boldsymbol{a}_{26} =001234$, &
$\boldsymbol{a}_{27} =001233$, &
$\boldsymbol{a}_{28} =001123$, &
$\boldsymbol{a}_{29} =001122$, &
$\boldsymbol{a}_{30} =001111$, \\
$\boldsymbol{a}_{31} = 000111$, &
$\boldsymbol{a}_{32} =000001$. &
\end{tabular}
\end{center}

\end{example}

{\bf Acknowledgement}

Theorem  {\rmfamily \ref{A(001)-gray-code-main}} and {\rmfamily \ref{A(010)-gray-code-main}} are presented at 
Permutations Patterns 2017 in Reykjavik.  
The author wishes to thank Antonio Bernini and Tomoki Yamashita for their valuable comments.

\end{document}